\documentclass[runningheads]{llncs}
\usepackage[T1]{fontenc}

\usepackage{graphicx}
\usepackage{thmtools}
\usepackage{thm-restate}
\usepackage{amsmath}
\usepackage{verbatim}

\usepackage{hyperref}

\urlstyle{rm}

\usepackage{tikz}
\usetikzlibrary{arrows, quotes}
\usetikzlibrary{arrows.meta}

\begin{document}

\title{Antimagic labellings of $(k,2)$-bipartite biregular graphs}
\author{Grégoire Beaudoire\inst{1}\orcidID{0000-0002-3944-7737} \and
Cédric Bentz\inst{1}\orcidID{0009-0009-8597-1669} \and
Christophe Picouleau\inst{1}\orcidID{0000-0001-8092-1923}}
\authorrunning{G. Beaudoire et al.}

\institute{{CNAM, Paris}}
\maketitle 

\begin{abstract}

An antimagic labelling of a graph $G = (V,E)$ is a bijection from $E$ to $\{1,2, \ldots, |E|\}$, such that all vertex-sums are pairwise distinct, where the vertex-sum of each vertex is the sum of labels over edges incident to this vertex. A graph is said to be antimagic if it has an antimagic labelling. Recently, it has been proven that $(s,t)$-bipartite biregular graphs are antimagic if $|s - t| \geq 2$ and $s$ or $t$ is odd. In this paper, we extend this result to connected $(k,2)$-bipartite biregular graphs for $k \geq 4$ even, and to $(k,2)$-bipartite biregular graphs for $k \geq 3$ odd.

\keywords{Antimagic labelling  \and Bipartite graphs \and Biregular graphs}
\end{abstract}

\section{Introduction and definitions}

In this paper, we only consider finite, simple, and undirected graphs. We refer to \cite{west} for undefined terminology.

Let $G=(V,E)$ be a graph with $|V| = n$ and $|E| = m$. We denote by $d_G(v)$ the degree of a vertex $v\in V$. If $d_G(v)=k$ for every vertex $v\in V$, then $G$ is \emph{$k$-regular}.

If $G=(V,E)$ is a bipartite graph, then $V = X \cup Y$, with $E \subseteq X \times Y$; in this case, we write $G=(V = X \cup Y,E)$. Such a graph is a \emph{biregular bipartite graph} iff there exist $k$ and $k'$ such that $d_G(x)=k$ for each $x \in X$ and $d_G(y)=k'$ for each $y \in Y$. We also call such a graph a \emph{$(k,k')$-bipartite graph}. Hence, in a $(k,2)$-bipartite graph, we have $d_G(x)=k$ for each $x \in X$ and $d_G(y)=2$ for each $y \in Y$.

The subgraph of $G=(V,E)$ \emph{induced by} a set $S\subseteq V$ is denoted by $G[S]$, and defined as $G[S]=(S,F)$ where $F=\{xy\in E : x,y\in S\}$.

We also denote by $K_p$ the complete graph with $\vert V \vert = p$, and by $K_{p,q}$ the complete bipartite graph with $\vert X \vert = p$ and $\vert Y \vert = q$.

Given a graph $G = (V,E)$, let $f : E \rightarrow  \{1, 2, \ldots, m\}$ be a bijective labelling of the edges of $G$. For each vertex $u \in V$, we will denote by $\sigma(u) = \sum\limits_{v \in V : uv \in E} f(u,v)$ the sum of the labels over edges incident to $u$. If all the values $\sigma(u)$ are pairwise distinct, then $f$ is called an \emph{antimagic labelling} of $G$. If $G$ admits at least one antimagic labelling, then $G$ is said to be \emph{antimagic}.

Antimagic labelling was originally introduced by Hartsfield and Ringel in $1990$ \cite{hartsfield1990}, where they introduced the following conjecture:

\begin{conjecture}\label{conj}
    Every connected graph other than $K_2$ is antimagic.
\end{conjecture}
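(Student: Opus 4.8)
The statement to be established is the full Hartsfield--Ringel conjecture, which is open in general; accordingly, what I can offer is a program rather than a complete argument, and I want to be explicit about where it stalls. My first instinct is a greedy or inductive assignment: process the labels $1,2,\ldots,m$ in some order and commit each one so as to preserve the invariant that all determined vertex-sums stay pairwise distinct. The obstruction to making this work directly is that every label $f(uv)$ feeds \emph{both} $\sigma(u)$ and $\sigma(v)$ at once, so one cannot adjust a single vertex-sum in isolation; a choice that separates one pair of vertices may collapse another, and such collisions cascade. This global coupling of the $m$ labels into the $n$ vertex-sums is the heart of the difficulty: injectivity of the sum map is a constraint over the entire graph simultaneously, not a union of local conditions that a simple exchange argument can repair.

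The most promising organizing principle is to separate vertices by degree. A vertex $u$ of degree $d$ satisfies $\sigma(u)\in\left[\binom{d+1}{2},\, dm-\binom{d}{2}\right]$, so vertices whose degrees are far apart can be kept distinct by steering the largest labels toward the highest-degree vertices. This reduces the problem, morally, to controlling sets of vertices of \emph{equal} degree, whose extremal case is a regular graph, where the range estimate gives no separation whatsoever. It is folklore that regular graphs form the hard core of the conjecture, so the plan would be to assemble the known special cases — regular graphs of every degree at least two, which are now known to be antimagic; graphs of large minimum degree via the Combinatorial Nullstellensatz of Alon, Kaplan, Lev, Peled and Tarsi; trees with suitably few degree-two vertices — and then attempt to glue them along a structural decomposition of an arbitrary connected graph into near-regular and high-degree pieces.

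The hard part, and the reason this program stops short of a full proof, is exactly this gluing in the presence of sparse, irregular graphs in which many vertices share the same small degree: there the vertex-sum ranges overlap almost entirely, no individual label is decisive, and no known technique controls the resulting system of inequalities uniformly across all such graphs. The realistic concrete target is therefore a \emph{structured} subfamily whose incidence pattern is rigid enough to be labelled explicitly by hand. The $(k,2)$-bipartite biregular graphs are such a family: every vertex of $Y$ has degree two and so represents a length-two path through $X$, and the sums on $X$ and on $Y$ sit on opposite sides of a clean bipartite structure. This is the route I expect the paper to take, constructing explicit antimagic labellings for connected $(k,2)$-bipartite biregular graphs with $k\ge 4$ even and for $(k,2)$-bipartite biregular graphs with $k\ge 3$ odd.
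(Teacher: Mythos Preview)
You have correctly identified that the statement is the Hartsfield--Ringel conjecture, which remains open; the paper does not prove it either. It is stated purely as a conjecture and serves as motivation: the paper then narrows to exactly the structured subfamily you anticipate, establishing antimagicness for connected $(k,2)$-bipartite biregular graphs with $k\ge 4$ even and for (not necessarily connected) $(k,2)$-bipartite biregular graphs with $k\ge 3$ odd. Your assessment of the state of the problem and of the realistic target matches the paper's own framing, so there is no proof of the full conjecture to compare against.
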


The topic is the focus of a chapter of 12 pages in the dynamic survey on graph labelling by J. Gallian \cite{survey}.\\ 

Our paper mainly focuses on $(k,2)$-bipartite  graphs for $k \geq 3$. It is worthwhile to note that Conjecture \ref{conj} is not true for non-connected graphs, even if they do not have $K_2$ as a connected component. Such an example is the graph $2 P_3$, which corresponds to two copies of the antimagic graph $P_3$, and which is also a (non-connected) $(1,2)$-bipartite graph.

However, Conjecture \ref{conj} was proved to be true in several special cases, in particular in dense graphs \cite{alon2004}, and in some subclasses of trees (and hence also of connected bipartite graphs) \cite{deng2019,kaplan09,liang14DM}. Moreover, some basic results can easily be proved (see \cite{survey}): for instance, if $G$ is a cycle (a connected $2$-regular graph), a collection of cycles (a non-connected $2$-regular graph), or a path distinct from $K_2$, then $G$ is antimagic.

We now survey the most recent results concerning antimagic labellings of $k$-regular and  $(k,2)$-bipartite graphs. In $2014$, \cite{liang14JGT} proved that cubic graphs (i.e., $3$-regular graphs) are antimagic. This result was later extended to regular graphs with odd degree in \cite{cranston2015}, and then independently by \cite{Berczi2015} and \cite{chang2015} to all regular graphs in 2015:

\begin{theorem}[\cite{Berczi2015} and \cite{chang2015}]
    Regular graphs are antimagic.
\end{theorem}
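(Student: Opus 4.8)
We outline a plan. The argument splits into two routine reductions and one genuinely hard case. The cases $d\le 1$ are degenerate ($K_1$, and disjoint copies of $K_2$, which are not antimagic), so assume $d\ge 2$. \emph{First reduction:} if $d$ is odd, the theorem is exactly the result of \cite{cranston2015} quoted above, so we may assume $d=2k$ is even. \emph{Second reduction:} if $k=1$ then $G$ is a disjoint union of cycles, which is antimagic as recalled in the introduction. Hence the genuinely hard case, where I would concentrate all the effort, is $d=2k$ with $k\ge 2$.

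For the even case I would build the labelling in $k$ \emph{layers}. By Petersen's $2$-factor theorem a $2k$-regular graph decomposes as $G=F_1\cup\dots\cup F_k$ with each $F_i$ a spanning $2$-regular subgraph, i.e.\ a disjoint union of cycles. Assign the smallest block of labels $\{1,\dots,n\}$ to $F_1$, the next block $\{n+1,\dots,2n\}$ to $F_2$, and so on, so that $\sigma(v)=\sum_{i=1}^{k}\sigma_{F_i}(v)$, where $\sigma_{F_i}(v)$ denotes the contribution of the two edges of $F_i$ at $v$. Inside a single disjoint union of cycles carrying an interval of $n$ consecutive labels there is a lot of freedom — this already recovers the classical antimagic labellings of cycles and of unions of cycles — so each layer $\sigma_{F_i}(\cdot)$ can separately be made, say, injective, or weakly monotone along a prescribed vertex order, subject only to the local constraints forced by the edges of $F_i$.

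The crux, and the step I expect to be the real obstacle, is to make the $k$ layers \emph{cooperate}: nothing so far prevents two vertices $u,v$ from getting $\sigma(u)=\sigma(v)$ because $u$ is slightly heavier than $v$ in layer $F_1$ but slightly lighter in layer $F_2$. As each $\sigma_{F_i}$ ranges over an interval of length only $2n-4$, while distinct layer-sums may differ by as little as $1$, no ``leading block dominates'' estimate applies and the layers cannot be treated independently. The first thing I would try is to fix once and for all a linear order $v_1\prec v_2\prec\dots\prec v_n$ and to label every $F_i$ so that $\sigma_{F_i}$ is weakly $\prec$-increasing, with the last layer $F_k$ strictly $\prec$-increasing; then $\sigma$ is strictly $\prec$-increasing, hence injective. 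This meets a parity obstruction: a short computation shows that on a single even cycle the vertex-sums cannot be made strictly monotone in the cyclic order, so $\prec$ would have to interleave the cycles of all the $F_i$ at once, which is impossible in general since the $F_i$ are unrelated. Getting around this — by choosing the $2$-factorization and the order $\prec$ together (e.g.\ constructing an antimagic-type labelling of $F_k$ first, reading $\prec$ off it, and only then asking the earlier layers for weak compatibility), by permitting a bounded number of local label swaps whose $O(1)$ effect is tracked globally, and by a case analysis on the cycle lengths occurring in the $F_i$ — is where essentially all the difficulty lies, and this is precisely the point at which the proofs of \cite{Berczi2015,chang2015} do their real work; the two reductions above contribute nothing to it. I would therefore expect the bulk of a clean write-up to be this layer-cooperation argument, very possibly organised through the equivalent ``Eulerian orientation + $k$ arc-disjoint permutations of $V$'' picture, which tends to make the bookkeeping more uniform than the undirected $2$-factor one.
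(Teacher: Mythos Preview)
The paper does not prove this theorem at all: it is quoted as a known result of \cite{Berczi2015} and \cite{chang2015}, with no argument given, and is used only as background for the biregular bipartite results that follow. So there is no ``paper's own proof'' to compare against.

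As for your proposal itself, it is explicitly a plan rather than a proof, and you correctly locate the genuine gap yourself. The two reductions (odd degree handled by \cite{cranston2015}; $2$-regular handled as unions of cycles) are fine, and the Petersen $2$-factor decomposition with blocked label intervals is a natural opening move. But the crux you identify --- making the $k$ layers cooperate so that no cancellation occurs between contributions $\sigma_{F_i}(u)$ and $\sigma_{F_i}(v)$ --- is not resolved: you observe that the naive ``weakly monotone in each layer, strictly in the last'' idea is obstructed by parity on even cycles, and then defer to ``choosing the $2$-factorization and the order $\prec$ together'' or ``bounded local swaps'' without carrying any of these out. That is precisely the content of the cited papers, and your write-up does not supply it. In short, what you have is an accurate roadmap with the hard step left blank, not a proof; and since the present paper only cites the result, there is nothing here to benchmark it against beyond the references themselves.
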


Remark that, for $k$-regular graphs, $k \geq 2$, the ``connected'' property does not matter, as labelling of connected components can easily be translated without incurring conflicts that one would expect to happen in a (non-regular) graph with several connected components.

The most recent result concerning biregular bipartite graphs, proven by Yu in $2023$ \cite{Yu2023}, is the following:

\begin{theorem}[Theorem 1.5 in \cite{Yu2023}]
    Let $G = (V = X \cup Y, E)$ be an $(s,t)$-bipartite  graph. If $s \geq t+2$ and one of $s$ or $t$ is odd, then $G$ is antimagic.
\end{theorem}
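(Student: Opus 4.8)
The plan is to force the two sides of the bipartition to take their vertex-sums in two disjoint intervals: a low interval for the degree-$t$ side $Y$ and a high one for the degree-$s$ side $X$, with the sums pairwise distinct inside each side. Write $n_1=|X|$, $n_2=|Y|$, $m=sn_1=tn_2$. Counting each edge once from each of its two ends gives the identity $\sum_{x\in X}\sigma(x)=\sum_{y\in Y}\sigma(y)=\binom{m+1}{2}$, so the mean vertex-sum on $X$ exceeds the mean vertex-sum on $Y$ by $\binom{m+1}{2}\bigl(\tfrac1{n_1}-\tfrac1{n_2}\bigr)=\tfrac{(s-t)(m+1)}{2}\ge m+1$. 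Since (as the next step shows) each side's sums can be confined to an interval of length less than $m$, there is room to place every $\sigma(y)$ below every $\sigma(x)$; this is exactly where $s\ge t+2$ is used, the gap of $2$ being what makes the difference of the means outrun the spread of the sums. The remaining difficulty is to realise such a separation on the concrete graph.

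First I would pin down the easy side. Fix a partition of $\{1,\dots,m\}$ into $n_2$ blocks of size $t$ with tightly clustered block-sums --- for instance block $j=\{\,j,\ j+n_2,\ j+2n_2,\ \dots,\ j+(t-1)n_2\,\}$, of sum $tj+n_2\binom{t}{2}$, so that the block-sums form an arithmetic progression of common difference $t$ --- and declare that, for each $y\in Y$, the $t$ edges incident to $y$ carry exactly the labels of one such block. This already makes all $\sigma(y)$ pairwise distinct and confined to a short low window $W_Y$, and it leaves precisely one free parameter at each $y$: the bijection between $y$'s $t$ labels and $y$'s $t$ edges, which does not affect $\sigma(y)$; there is also the freedom of which block is assigned to which $y$. (If it helps one may also permute the blocks or choose a different balanced block family.)

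Then comes the real work: use the leftover freedom to make the $n_1$ sums $\sigma(x)$ pairwise distinct and all above $\sup W_Y$. Each $x$ meets $s$ edges, and on the edge $xy$ we may still choose which element of $y$'s block to place there (subject to the bijection at $y$). I would process the vertices of $X$ one at a time in a Hall/greedy fashion, showing at each step that the reshufflings still available at the not-yet-committed incident $y$'s let $\sigma(x)$ hit a fresh integer of the high window without disturbing earlier choices. The hypothesis that $s$ or $t$ is odd enters here: I would expect to use it to force the sums on one side into a fixed residue class modulo $2$, which both rules out a $\sigma(x)$ ever equalling a $\sigma(y)$ at the common boundary of the two windows and roughly halves the number of distinct values one must hit. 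To keep the greedy choices coherent I would carry them out along a combinatorial structure of $G$: in the $t=2$ case this is the $s$-regular multigraph $H$ obtained from $G$ by suppressing the degree-$2$ vertices, processed along the cycles of a $2$-factorisation of $H$ when $s$ is even (and of $H$ with a near-perfect matching deleted when $s$ is odd).

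The hard part will be this last step: extracting $n_1$ genuinely distinct $X$-sums, all lying inside the not-too-wide high window, from only the meagre local freedom of permuting labels within each block. The extremal case $s=t+2$, where the two windows barely clear each other, is the one to watch, together with the configurations in which $n_1$ is very small --- few vertices to separate, but correspondingly little room to move them --- which likely require a separate hands-on treatment. Converting the ``it holds on average'' estimate of the first paragraph into a construction that holds pointwise, and verifying that the auxiliary structure driving the greedy step is always available, is where the bulk of a complete proof would go.
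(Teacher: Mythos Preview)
This theorem is not proved in the paper at all: it is quoted in the introduction as a known result of Yu (2023), with no argument supplied. The paper's own contributions are the separate Theorems~\ref{Connected}, \ref{Unconnected}, and~\ref{ConnectedkEven} on $(k,2)$-bipartite graphs, so there is no ``paper's own proof'' against which to compare your attempt.

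Taken on its own terms, what you have written is a plan, not a proof, and you say as much. The averaging computation in your first paragraph is correct and is the right heuristic, and the arithmetic-progression block decomposition on the $Y$-side is a sound opening move. But the entire content of the theorem lies in the step you explicitly leave undone: converting the residual permutation freedom inside each block into $n_1$ pairwise distinct $X$-sums all sitting above the $Y$-window. Your proposed mechanism for $t=2$---suppress the degree-$2$ vertices to obtain an $s$-regular multigraph and process along a $2$-factorisation---runs into exactly the obstacle the paper flags just after the statement you are trying to prove: the known antimagic arguments for simple regular graphs do not obviously extend to multigraphs, and a $(k,2)$-bipartite graph contracts to a simple $k$-regular graph only when it is $C_4$-free. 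So the sketch identifies the right difficulty but does not resolve it; a complete proof would need to supply the greedy/Hall step in full and either avoid or genuinely handle the multigraph issue.
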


The cases where $s = t + 1$ or both $s$ and $t$ are even are still open.

At about the same time, some results related to this question were independently proved in \cite{deng2022}. Namely, the following results were proved:

\begin{theorem}[Theorem 2 in \cite{deng2022}]
    Let $G$ be a connected $k$-regular graph. Then the graph $G^{\prime}$, obtained by subdividing every edge in $G$ exactly once, is antimagic.
\end{theorem}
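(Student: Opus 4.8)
The plan is to exploit the bipartite biregular structure of $G'$. Write $G' = (X \cup Y, E')$, where $X$ is the set of original vertices (each of degree $k$ in $G'$), $Y$ is the set of subdivision vertices (each of degree $2$), and $e \mapsto y_e$ is the natural bijection between $E(G)$ and $Y$; the two edges of $G'$ incident to $y_e$ are the two \emph{half-edges} of $e$, one at each endpoint of $e$ in $G$. With $m = |E(G)| = nk/2$ we have $|E'| = 2m$, and we must build a bijection $f : E' \to \{1, \dots, 2m\}$ with pairwise distinct vertex-sums. Since each edge of $G'$ joins one vertex of $X$ to one of $Y$, one has $\sum_{x \in X} \sigma(x) = \sum_{y \in Y} \sigma(y) = m(2m+1)$, so the $X$-sums average about $km$ while the $Y$-sums average about $2m$; the strategy is to turn this disparity into an exact separation.

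First I would fix an orientation of $G$ and a linear order $e_1, \dots, e_m$ of $E(G)$. When $k$ is even, $G$ is connected with all degrees even, hence Eulerian, and I would take the orientation induced by an Eulerian circuit (so that every vertex has in-degree and out-degree $k/2$) together with the order in which the circuit traverses its edges; when $k$ is odd I would instead work from a connected spanning substructure (say a DFS tree) and orient and order accordingly so that the in-degrees fall into a small controlled set. Given the orientation and order, I would put a ``small'' label on the half-edge of $e_i$ at its tail and a ``large'' one on the half-edge at its head; the first template to try is the label $i$ on the tail half-edge and $m+i$ on the head half-edge. Then $\sigma(y_{e_i}) = i + (m+i) = m + 2i$, which is pairwise distinct and lies in $\{m+2, \dots, 3m\}$, so the whole difficulty shifts to the $X$-side.

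Under this template $\sigma(v) = m\,d^{-}(v) + \sum_{e_i \ni v} i$, where $d^{-}(v)$ is the in-degree of $v$: a fixed multiple of $m$ determined by the in-degree, plus the sum of the indices of the edges at $v$. The heart of the proof is then to choose the edge-order (and, for odd $k$, the orientation) so that these values are pairwise distinct over $v \in X$ and all exceed $3m$, which separates the $X$-sums from the $Y$-sums and finishes the argument. For even $k \geq 6$ the crude estimate $\sigma(v) \geq m k/2 + k(k+1)/2 > 3m$ already holds, so only distinctness of the index-sums $\sum_{e_i \ni v} i$ has to be secured, and this can be done by starting from an Eulerian-circuit order and performing a few local swaps of labels to break any ties; connectivity of $G$ is exactly what guarantees such an order exists. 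The small cases $k = 3$ and $k = 4$ need a separate, more hands-on treatment, because there the ``large labels dominate'' shortcut fails (the average number of large labels at an $X$-vertex is at most $2$); for these I would track the index-sums vertex by vertex and perturb both the permutation used for the large labels and the local order to force distinctness directly. (The degenerate case $k = 2$ is immediate, since then $G'$ is a single cycle, which is known to be antimagic.)

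The main obstacle is precisely this $X$-side bookkeeping: making the original-vertex sums pairwise distinct while keeping every one of them above every subdivision-vertex sum. I expect it to require a genuine case split on the parity of $k$ — Eulerian versus non-Eulerian $G$, hence balanced versus unbalanced orientations — together with a dedicated treatment of $k \in \{3,4\}$, where the crude magnitude bound is unavailable and one has to exhibit an explicit labelling and verify distinctness by direct computation.
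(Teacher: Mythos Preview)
This statement is not proved in the paper: it is quoted from \cite{deng2022} as prior work, and the paper then proves its own, broader Theorems~\ref{Connected}, \ref{Unconnected} and \ref{ConnectedkEven} about general $(k,2)$-bipartite biregular graphs (not just subdivisions of simple $k$-regular graphs). So there is no proof here to compare your proposal against directly.

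On your sketch itself: the template ``$i$ on the tail half-edge, $m+i$ on the head half-edge'' does force all $Y$-sums to be $m+2i$, pairwise distinct, and for even $k\ge 6$ your crude bound cleanly separates $X$-sums from $Y$-sums. But the step you yourself flag as the main obstacle is a genuine gap, not a detail. Saying one can make the index sums $\sum_{e_i\ni v} i$ pairwise distinct by ``a few local swaps to break ties'' is not a proof: a swap that removes one collision can create another, and you give no invariant, potential function, or structural lemma guaranteeing termination. For odd $k$ you do not specify an orientation with controlled in-degrees, and for $k\in\{3,4\}$ you defer entirely to an unspecified hands-on construction. As written, the proposal is an outline whose hard combinatorial core is missing.

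For contrast, the labellings the paper develops for its own theorems follow a quite different route: a BFS layering from a degree-$k$ root, an interval of labels reserved per layer, a parity invariant (degree-$2$ vertices get odd vertex-sums, other degree-$k$ vertices get even ones) enforced by always giving the two edges at each degree-$2$ vertex consecutive labels, and a greedy path-exploration inside each layer that pairs labels so as to bound $\sigma'(u)$ from above on the far side of the layer and from below on the near side. This machinery never appeals to an orientation of an underlying regular graph, which is precisely why it extends to $(k,2)$-bipartite graphs that are not subdivisions of any simple regular graph.
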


However, it should be noticed that the authors claim in \cite{deng2022} that every $(k,2)$-bipartite graph can be obtained by subdividing exactly once every edge (i.e., by replacing each edge $uv$ by two new edges $uw$ and $wv$, where $w$ is a new vertex) of a $k$-regular graph. Hence, they conclude that the above result also applies to every connected $(3,2)$-bipartite graph. 

Unfortunately, in the case of $k$-regular simple graphs, this is not true in general. Indeed, it can easily be seen that a $(k,2)$-bipartite graph with $k \geq 2$ can be obtained by subdividing every edge of a $k$-regular simple graph exactly once if and only if it does not contain $C_4$ as a (necessarily induced) subgraph. Besides, it does not seem that the labelling process described in \cite{Berczi2015} for simple regular graphs can be easily adapted to the case of non-simple regular graphs.

More recently, some additional results were proven over the subdivisions of regular graphs with various specific conditions in \cite{wei2025}.

In the present paper, we manage to settle the case of $(3,2)$-bipartite graphs, not necessarily connected, with a proof easily adaptable to $(k,2)$-bipartite graphs for $k \geq 3$ odd. Note that the case $k > 3$ with $k$ odd was already proved, with a different construction, in \cite{Yu2023}. We show the following results:

\begin{restatable}{theorem}{Connected}
\label{Connected}
    Every connected $(k,2)$-bipartite graph, with $k \geq 3$ odd, is antimagic.
\end{restatable}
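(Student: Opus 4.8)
The plan is to use heavily that, because every $y\in Y$ has exactly two neighbours, $G$ is a \emph{subdivision}: identifying each $y\in Y$ with an edge, joining its two neighbours, of a multigraph $H$ on the vertex set $X$, the graph $H$ is connected and $k$-regular, $q:=|Y|=|E(H)|=nk/2$ (so $n$ is even), and $m=|E(G)|=2q$; conversely each edge of $H$ splits into a pair of edges of $G$ meeting at a $Y$-vertex. First I would pin down how labels are paired: using a bijection $w:Y\to\{1,\dots,q\}$ still to be chosen, give to $y$ the two labels $w(y)$ and $q+w(y)$. Whatever $w$ is, the $Y$-sums are then $\sigma(y)=q+2w(y)$, hence already pairwise distinct and all of the same parity as $q$. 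Two degrees of freedom remain: the bijection $w$, and, for each edge of $H$, which of its two $G$-edges carries the larger label $q+w(y)$ --- that is, an orientation of $H$, with the larger label placed at the head. Writing $\deg^-(x)$ for the resulting in-degree, a direct computation gives $\sigma(x)=C_x+q\cdot\deg^-(x)$ for $x\in X$, where $C_x:=\sum_{y\sim x}w(y)$ depends only on $w$.

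To make the values $\deg^-(x)$ tractable I would take an \emph{almost balanced} orientation: add a perfect matching $M^*$ on $X$ (as new edges, parallel ones allowed), so $H\cup M^*$ is a connected $(k+1)$-regular multigraph; orient an Eulerian circuit of it, giving every vertex in-degree $(k+1)/2$; then delete $M^*$. Each vertex loses exactly one edge, so $\deg^-(x)\in\{(k-1)/2,(k+1)/2\}$, and since each matching edge is oriented one way, exactly $n/2$ vertices of $X$ land in each class; call them $X^-$ and $X^+$. Then $\sigma(x)=C_x+q(k-1)/2$ on $X^-$ and $\sigma(x)=C_x+q(k+1)/2$ on $X^+$, so the $X$-sums are pairwise distinct provided the $C_x$ are distinct inside each of $X^-,X^+$ and no $x\in X^-$, $x'\in X^+$ satisfy $C_x-C_{x'}=q$, and they avoid all the $Y$-sums as soon as each $\sigma(x)$ has parity opposite to $q$ --- a condition only on the parities of the $C_x$. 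The unique obstruction to making the $C_x$ distinct is the existence of two $X$-vertices with the same $Y$-neighbourhood, and since a $Y$-vertex has only two neighbours this forces $G=K_{2,k}$; that single family I would dispose of by an explicit ad hoc labelling.

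Everything thus comes down to choosing $w:Y\to\{1,\dots,q\}$ so that $C_x=\sum_{y\sim x}w(y)$ satisfies the distinctness and parity requirements above, and this is where I expect the real difficulty to lie. A careless greedy assignment does not suffice --- near the end of such a process only a handful of admissible weights remain, while the number of potential equalities to kill is linear in $n$ --- so one must exploit the global structure of $H$: I would order the edges of $H$ along a spanning tree together with the remaining edges, assign the weights accordingly, and play the still-free weights against the conditions using connectivity, only ``locking in'' a value $C_x$ once it can no longer clash. Structurally, the reason this coordination is delicate is that for $k=3$ --- the genuinely new case, $|s-t|=1$ lying outside \cite{Yu2023} --- the $X$-sums and the $Y$-sums occupy overlapping ranges covering a constant fraction of $\{1,\dots,2q\}$, so there is no separating the two families by magnitude and one is forced to interleave them, which is exactly why the parity bookkeeping must be carried all the way through the construction. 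Finally, one must never assume $H$ is simple: $H$ may have parallel edges, equivalently $G$ may contain induced $C_4$'s --- precisely the point missed in \cite{deng2022} --- so the orientation and ordering steps have to be phrased for multigraphs; and the passage from $k=3$ to general odd $k\ge 3$ only changes the arithmetic of the label blocks and is otherwise unchanged.
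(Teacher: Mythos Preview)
Your route is genuinely different from the paper's: the paper never passes to the underlying multigraph $H$ at all, but instead roots $G$ at a degree-$k$ vertex, stratifies by BFS distance, and labels the edge layers from the outside in with a carefully engineered three-step procedure that forces $\sigma(y)$ odd for every $y\in Y$ and $\sigma(x)$ even for every $x\in X\setminus\{r\}$, while simultaneously guaranteeing $\sigma$ is strictly increasing as one moves towards the root on the $X$-side. Your subdivision/orientation reduction is elegant and the arithmetic you set up (pairing $w(y)$ with $q+w(y)$, Eulerian-balancing the in-degrees via a temporary matching) is correct.

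The gap is that you stop exactly at the hard step. After the reduction, the entire theorem rests on producing a bijection $w:Y\to\{1,\dots,q\}$ such that the $C_x=\sum_{y\sim x}w(y)$ are pairwise distinct inside $X^-$ and inside $X^+$, satisfy a prescribed parity at every vertex, and never realise the single forbidden difference $q$ across the two classes. You acknowledge that a naive greedy fails, and then offer only a one-sentence plan (``order the edges along a spanning tree \dots\ play the still-free weights against the conditions''). That is not a proof, and it is not clear it can be made into one: what you are asking for is essentially an antimagic-type edge labelling of the $k$-regular \emph{multigraph} $H$ with extra parity constraints at every vertex, and no such result is available off the shelf (indeed, the simple-graph antimagic theorems you would want to invoke do not extend to multigraphs in any known way). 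Concretely, the parity requirement alone --- e.g.\ when $q$ is even you need every vertex of $H$ to be incident with an odd number of odd-labelled edges --- is a global $T$-join-type condition that interacts with the distinctness and forbidden-difference constraints in a way your sketch does not address. The paper's layered construction avoids this bottleneck entirely by building parity and monotonicity into the labelling as it goes; your reduction trades that explicit construction for a cleaner statement, but you have not discharged it.
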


\begin{restatable}{theorem}{Unconnected}
    \label{Unconnected}
    Every $(k,2)$-bipartite graph, with $k \geq 3$ odd, is antimagic.
\end{restatable}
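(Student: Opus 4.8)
The plan is to derive Theorem~\ref{Unconnected} from Theorem~\ref{Connected}. Decompose $G$ into its connected components $G_1,\dots,G_c$; each is a connected $(k,2)$-bipartite graph with $k\geq 3$ odd, hence antimagic by Theorem~\ref{Connected}, and the task is to glue these labellings together into one antimagic labelling of $G$. The natural attempt is to fix an ordering of the components, give $G_i$ the block of consecutive labels $\{s_{i-1}+1,\dots,s_i\}$ (with $s_i=m_1+\dots+m_i$ and $m_i=|E(G_i)|$), and inside $G_i$ use its own antimagic labelling shifted up by the constant $s_{i-1}$. Unlike in the regular case, this does not work automatically: for a biregular graph, adding a constant $s$ to every edge-label turns a vertex-sum $\sigma(v)$ into $\sigma(v)+s\,d_G(v)$, so a degree-$k$ sum and a degree-$2$ sum that were distinct may coincide after the shift --- and likewise a degree-$k$ vertex of one block may collide with a degree-$2$ vertex of another.

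To cope with this I would prove Theorem~\ref{Connected} in a form strong enough to feed the gluing: that a connected $(k,2)$-bipartite graph on $m$ edges admits an antimagic labelling with a controlled sum-profile, e.g.\ one in which the degree-$2$ vertex-sums occupy a short interval of consecutive integers, all the degree-$k$ vertex-sums occupy another short interval lying above it, and both intervals are located explicitly. Such a separation is numerically consistent even for $k=3$, since the average degree-$k$ vertex-sum is $k(m+1)/2$ while the average degree-$2$ vertex-sum is only $m+1$. Given this profile, the shift by $s_{i-1}$ translates the degree-$2$ interval of $G_i$ by $2s_{i-1}$ and its degree-$k$ interval by $ks_{i-1}\geq 2s_{i-1}$, so the two intervals stay disjoint and each shifted component remains antimagic; moreover the degree-$2$ sums of $G_i$ then lie in the open interval $(2s_{i-1},2s_i)$ and its degree-$k$ sums in $(ks_{i-1},ks_i)$, and the intervals $(2s_{i-1},2s_i)$ are pairwise disjoint, as are the intervals $(ks_{i-1},ks_i)$, so sums of the same type coming from different components never clash.

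What remains, and what I expect to be the real obstacle, is to forbid a degree-$2$ sum of one component from equalling a degree-$k$ sum of another: the degree-$k$ sums of the $i$-th block sit near $k$ times the prefix mass $s_{i-1}$ whereas the degree-$2$ sums sit near $2s_{i-1}$, so the two families of narrow intervals advance at different speeds and can still cross when several of the components have comparable sizes. I would attack this through a careful choice of the component ordering --- processing the components in decreasing order of size helps, since then the prefix mass quickly dominates the span of each subsequent block --- together with the explicit interval endpoints coming from the structured construction, and, where that is not enough, by perturbing a few of the blocks away from being exactly consecutive so as to step over the finitely many already-used sum-values. As everywhere in the paper, the extremal case is $k=3$, and the leftover tight configurations (essentially $k=3$ with some of the largest components of comparable size) I would finish off by a direct, hand-made labelling.
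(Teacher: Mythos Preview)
Your proposal is not a proof but a programme, and the programme has a real gap precisely where you flag it. The strengthened form of Theorem~\ref{Connected} you ask for --- degree-$2$ sums confined to one short interval and degree-$k$ sums to another lying above it --- is not obviously attainable, and in fact the labelling of Section~\ref{Labellingcon32} does the opposite: the degree-$2$ sums are exactly the values $2\alpha+1$ for $\alpha$ odd, so they occupy the whole range $[3,2m-1]$, interleaved with the degree-$k$ sums rather than separated from them. Even granting an interval-separated labelling inside each component, your own analysis shows that the degree-$2$ window of a later block and the degree-$k$ window of an earlier block advance at different rates in the prefix mass and can overlap; ``perturb a few blocks'' and ``finish the extremal configurations by hand'' are not arguments.

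The paper sidesteps all of this by exploiting two structural features of the Section~\ref{Labellingcon32} labelling that survive the shift by a constant and that you have not used. First, every degree-$2$ sum is odd and every non-root degree-$k$ sum is even, so after labelling the components with consecutive blocks the \emph{only} possible collisions are between a root $r_i$ and some degree-$2$ vertex. Second, the degree-$2$ sums are not merely distinct but at mutual distance at least $4$. Hence one can repair a bad root $r_i$ by swapping the consecutive label pair $(B_i-1,B_i)$ at the top of $C_i$ with the pair $(B_i+1,B_i+2)$ at the bottom of $C_{i+1}$: this raises $\sigma(r_i)$ by exactly $2$, leaves the multiset of degree-$2$ sums unchanged, and --- because of the spacing-$4$ property --- cannot land $\sigma(r_i)$ on another degree-$2$ value. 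The work in the paper is then to check that the swap does not spoil the internal invariants of $C_{i+1}$ (handled by regenerating that component's labelling, with a short case analysis on the type of the affected deepest-layer vertex). This parity-plus-spacing mechanism is what your interval-separation plan is missing.
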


We also provide another labelling process that allows us to extend the results for $(k,2)$ bipartite graphs, with $k$ even:

\begin{restatable}{theorem}{ConnectedkEven}
    \label{ConnectedkEven}
    Every connected $(k,2)$-bipartite graph, with $k \geq 4$ even, is antimagic.
\end{restatable}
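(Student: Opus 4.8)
The plan is to exploit that, when $k$ is even, a $(k,2)$-bipartite graph is Eulerian away from its degree-$2$ vertices. Contracting each $y\in Y$ into an edge turns $G=(V=X\cup Y,E)$ into a connected $k$-regular multigraph $H$ on the vertex set $X$: it has no loops (as $G$ is simple), its edges are in bijection with $Y$, and $y$ corresponds to a path $x-y-x'$ of $G$. Writing $k=2r$ with $r\ge 2$, the multigraph $H$ has an Eulerian circuit and, by Petersen's $2$-factor theorem, decomposes into $r$ edge-disjoint $2$-factors $F_1,\dots,F_r$; correspondingly $E(G)$ splits into $r$ spanning families $G_1,\dots,G_r$ of even cycles, where $G_i$ is the subdivision of $F_i$. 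Thus every $x\in X$ lies on exactly one cycle of each $G_i$ (with degree $2$ there), while the $Y$-vertices are partitioned among the $G_i$'s. The degree-$2$ side will be easy: each $\sigma(y)$ is a sum of only two labels lying in a single ``block'', and all the $\sigma(y)$ will be forced pairwise distinct essentially for free; the whole difficulty is the set $\{\sigma(x):x\in X\}$.

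For $k\ge 6$ (that is, $r\ge 3$), I would give $G_i$ the contiguous block $B_i$ of $2|X|$ consecutive labels and, inside $B_i$, give each cycle of $G_i$ a consecutive sub-block, labelled cyclically as follows: on a cycle $x^{(1)}-y^{(1)}-\cdots-x^{(s)}-y^{(s)}-x^{(1)}$ with sub-block $\{L+1,\dots,L+2s\}$, put the two largest labels on the two edges at a chosen ``top'' vertex $x^{(1)}$ and label the remaining path of $2s-2$ edges by consecutive integers. A short computation then shows that the $Y$-sums on the cycle are pairwise distinct and lie in $(2L,\,2L+4s)$, while the $X$-sums on the cycle are pairwise distinct, equal to $\{2L+4j+3:0\le j\le s-1\}$, and increasing along the cycle with $x^{(1)}$ last. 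Laying the sub-blocks out consecutively makes their windows disjoint and ordered, so all the $\sigma(y)$ are pairwise distinct at once; and since $\sigma(x)=\sum_{i=1}^r\sigma_{G_i}(x)\ge\sum_{i=1}^r\big(4(i-1)|X|+3\big)$ while $\max_y\sigma(y)\le 4r|X|-1$, the inequality $2|X|r(r-1)+3r>4r|X|-1$, which holds for $r\ge 3$, already separates every $X$-sum from every $Y$-sum. The case $k=4$ falls just outside this bound, so there I would instead label $E(G)$ directly around an Eulerian circuit of $G$, assigning $1,2,\dots,4|X|$ in circuit order: since $|E(G)|=4|X|\equiv 0\pmod 4$ and the circuit alternates between $X$ and $Y$, every $\sigma(y)$ is odd (two consecutive labels at a degree-$2$ vertex) and every $\sigma(x)$ is even, so the two classes are disjoint, the $\sigma(y)$ are pairwise distinct because $\sigma(y)$ is twice the circuit-position of $y$ plus one, and again only the $\sigma(x)$ remain to be handled.

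The crux, in both regimes, is to make the $X$-sums \emph{mutually} distinct. For $k=4$, $\sigma(x)$ equals twice the sum of the two circuit-positions at which $x$ is visited (up to one global correction), so one must choose the Eulerian circuit of $H$ so that all these position-sums are different; I would obtain this by an exchange argument, starting from an arbitrary Eulerian circuit and locally rerouting — e.g.\ reversing a sub-circuit — to kill a tie without creating a new one. For $k\ge 6$, $\sigma(x)=\sum_{i=1}^r\sigma_{G_i}(x)$ and one must rule out cancellations across blocks; the clean sufficient condition is a single linear order $\prec$ on $X$ with $\sigma_{G_i}(\cdot)$ $\prec$-increasing for \emph{every} $i$, which forces one to realize the same order in every block by choosing the order of the cycles and the top vertex of each cycle — and, when the cyclic order of the $X$-vertices on a cycle is too rigid to match $\prec$, by using the remaining freedom in the per-cycle labelling to realise orders of the $X$-sums that are not mere rotations; alternatively one builds the $2$-factorization and the labelling together, vertex by vertex along a fixed order of $X$. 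I expect this coordination step to be the heart of the argument. A final piece of bookkeeping concerns small ``defect'' configurations — for instance a cycle of length $\equiv 2\pmod 4$, which arises when some $F_i$ is non-bipartite, as for $G$ the subdivision of $K_5$ — where the parity pattern fails at a bounded number of $Y$-vertices that then have to be separated from the $X$-sums by an explicit local adjustment; the case $k=4$ is the most delicate throughout.
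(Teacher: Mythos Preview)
Your approach is genuinely different from the paper's. The paper keeps $G$ as is, roots it at a degree-$k$ vertex $r$, partitions $E$ into BFS layers $L_i$, and labels layer by layer from the farthest one inward; for even $k$ the only change versus the odd case is how the auxiliary set $F_i$ is labelled in Step~1, with a case split on $|F_i|\bmod 4$ designed to preserve the parity invariant ($\sigma$ odd on $Y$, even on $X\setminus\{r\}$) and the monotonicity $\sigma(V_{2i-2})>\sigma(V_{2i})$. You instead contract $Y$ to get a $k$-regular multigraph $H$ and use a $2$-factorization (or an Eulerian circuit when $k=4$). The paper's method is local and bookkeeping-heavy; yours is structurally cleaner and gives the $X$/$Y$ separation for $k\ge 6$ by a neat global count.

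The proposal, however, has a real gap exactly where you yourself say the heart of the argument lies: the pairwise distinctness of the $X$-sums. For $k\ge 6$, your per-cycle labelling forces the $X$-sums along each cycle of each $G_i$ to appear in the \emph{cyclic} order of that cycle, up to rotation and reflection; on a cycle meeting $s\ge 4$ vertices of $X$, only $2s$ of the $s!$ linear orders are realizable this way. There is no reason the $r$ independent $2$-factors of $H$ should admit a common linear order $\prec$ on $X$ compatible with all these cyclic constraints, and you neither exhibit an alternative per-cycle scheme realising non-cyclic orders nor explain how to ``build the $2$-factorization and the labelling together''. For $k=4$, your reduction is to the statement: every connected loopless $4$-regular multigraph has an Eulerian circuit in which the sums $p_1(x)+p_2(x)$ of the two visit times are pairwise distinct. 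This is attractive but not standard, and the suggested exchange argument is not one --- reversing a sub-circuit between two visits of a vertex reflects \emph{all} interior position-sums about a common centre and shifts every straddling one, so a single move perturbs many sums simultaneously, and you provide no monovariant guaranteeing progress. The ``defect'' paragraph likewise concedes that the parity pattern can fail on cycles of length $\equiv 2\pmod 4$ but gives no concrete repair. In short, the $Y$-side and the $X$/$Y$ separation are fine, but the distinctness of the $X$-sums is asserted rather than proved in both regimes; the paper's layered scheme avoids this difficulty entirely, since within each $V_{2i}$ Step~2 sorts by partial sum before assigning the last label, and across layers the explicit bounds force strict monotonicity.
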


The article is organized as follows. In Section $2$, we give the notations and the basic notions that we will use in our labellings in this article. In Section $3$, we explain the labelling algorithm for connected $(k,2)$-bipartite graphs, with $k \geq 3$ odd. Its correctness is then proven in Section $4$. In Section $5$, we show how to generalize it to non-connected graphs. In Section $6$, we show how to obtain an antimagic labelling for connected $(k,2)$-bipartite graphs, with $k \geq 4$ even.

\section{Notations and partitions of the graph}
\label{Notations}

Let $G = (V = X\cup Y,E)$ be a $(k,2)$-bipartite graph. We assume for now that $G$ is connected. Assume that $G$ is rooted at an arbitrary vertex $r$ of degree $k$. We partition the vertex set of $G$ depending on the distance from $r$, in such a way that $V = V_0 \sqcup V_1 \sqcup V_2 \ldots \sqcup V_l$ for some $l$, where $V_0 = \{r\}$, and where, throughout the paper, the symbol $\sqcup$ will be used to link together the parts of a given partition. This means that, for each vertex $x \in V$, $x \in V_i$ for some $i$ if and only if the distance from $r$ to $x$ is $i$ in $G$. Note that this defines a partition of $V$, since $G$ is assumed to be connected. Also note that this partition is such that, for any $x \in V_i$ with $i$ even, we have $d_G(x) = k$, and, for any $y \in V_j$ with $j$ odd, we have $d_G(y) = 2$. Moreover, there is no edge between any two vertices belonging to the same subset $V_i$.

For any $i$ and any given vertex $x \in V_i$, we will call \emph{predecessor of $x$} any vertex $y \in V_{i-1}$ such that $xy \in E$, and \emph{successor of $x$} any vertex $y \in V_{i+1}$ such that $xy \in E$ (assuming such a vertex $y$ exist).

One key idea in each of our labellings is that, at the end of the process, for each $u \in V$, $\sigma(u)$ is odd if $d_G(u) = 2$, and $\sigma(u)$ is even if $d_G(u) = k$ and $u \neq r$. This ensures that $\sigma(u) \neq \sigma(v)$ for each $u,v \in V \setminus\{r\}$ such that $d_G(u) = 2$ and $d_G(v) = k$.

We will label the edges of $G$ by starting from the farthest edges from $r$, and then progressing towards $r$. In order to do that, we will partition the edge set of $G$ into \emph{layers} $L_i$ (for $ 1 \leq i \leq p$, with $p = \lfloor \frac{l-1}{2} \rfloor + 1$). As shown in Figure \ref{fig2}, for each $i$, $L_i$ is the set of edges with an endpoint $v \in V_{2i-1}$.

\begin{figure}[!ht]
\centering
\begin{tikzpicture}[scale=0.5]
    \node at (0,0) (v1) {$\bullet$};
    \node at (3,0) (u1) {$\bullet$};
    \draw (-2,0) -- (v1.center) -- (u1.center);
    \draw[dashed] (5,-2) -- (u1.center) -- (5,2);
    \node at (0,-2) (v2) {$\bullet$};
    \node at (3,-4) (u2) {$\bullet$};
    \node at (0,-6) (v3) {$\bullet$};
    \draw (-2,-2) -- (v2.center) -- (u2.center) -- (v3.center) -- (-2,-6);
    \draw[dashed] (u2.center) -- (5,-4);
    \node at (-2,0) (w1) {$\bullet$};
    \node at (-2,-2) (w2) {$\bullet$};
    \node at (-2,-6) (w3) {$\bullet$};

    \node at (0,0.3) {$v_1$};
    \node at (3,0.3) {$u_1$};
    \node at (0,-1.7) {$v_2$};
    \node at (3,-3.7) {$u_2$};
    \node at (0,-5.7) {$v_3$};

    \draw[dashed] (-4,-1) -- (w1.center) -- (-4,1);
    \draw[dashed] (-4,-3) -- (w2.center) -- (-4,-2);
    \draw[dashed] (-4,-6) -- (w3.center) -- (-4,-5);

    \node at (-2,0.3) {$u_3$};
    \node at (-2,-1.7) {$u_4$};
    \node at (-2,-5.7) {$u_5$};

    \draw[red,thick] (0,-2.5) ellipse (0.8cm and 4cm);
    \node[red,thick] at (0,-7) {$V_{2i-1}$};

    \draw[blue,thick] (-2,-2.5) ellipse (0.8cm and 4cm);
    \node[blue, thick] at (-2,-7) {$V_{2i-2}$};

    \draw[blue,thick] (3,-1.9) ellipse (0.8cm and 2.5cm);
    \node[blue,thick] at (3,-5) {$V_{2i}$};

\end{tikzpicture}
\caption{A layer $L_i$ (full edges are in $L_i$, while dashed edges are not).}\label{fig2}
\end{figure}

We will first reserve an interval $[a_i; b_i]$ for each $L_i$, corresponding to the labels that will be used for the edges in $L_i$.

Note that, by definition, we have $E = L_1 \sqcup L_2 \sqcup \ldots \sqcup L_p$. We start by reserving the interval $[1; 2|V_{2p-1}|]$ for the labels of edges in $L_p$ (the farthest edges from $r$). We then reserve the next interval $[2|V_{2p-1}| + 1; 2|V_{2p-1}| + 2|V_{2p-3}|]$ for the labels of edges in $L_{p-1}$, and so on. Overall, for each $i \in \{0, \dots,  p-2\}$, we reserve the interval  $[2|V_{2p-1}| + 2|V_{2p-3}| + \ldots + 2|V_{2(p-i) - 1}| + 1; 2|V_{2p-1}| + 2|V_{2p-3}| + \ldots + 2|V_{2(p-i-1)-1}|]$ for the labels of the edges in $L_{p-i-1}$.

Consider now a given layer $L_i$ for some $i$, with its reserved interval $[a_i; b_i]$ (with $b_i = a_i + 2|V_{2i-1}| - 1$). 

We can first notice that, by the construction of the intervals, $a_i$ is odd and $b_i$ is even (since the number of edges to label in $L_i$ is $2|V_{2i-1}|$, an even number).

For any $i \geq 1$, we now categorize the vertices of $V_{2i}$ into $k$ different types (see Figure \ref{fig3} for an illustration with $k = 3$): the vertices in $V_{2i}$ will be of type $j \geq 1$ iff they have $j$ incident edges towards predecessors and $k-j$ incident edges towards successors. For any $i,j$, we will denote by $t_i^j$ the number of vertices in $V_{2i}$ of type $j$.

\begin{figure}[!ht]
\centering
\begin{tikzpicture}
    \node at (0,0) (v) {$\bullet$};
    \node at (1,0) (u) {$\bullet$};
    \draw (v.center) -- (-1,0);
    \draw (u.center) -- (v.center);
    \draw (u.center) -- (2,0.6);
    \draw (u.center) -- (2,-0.6);
    \node at (1,-0.3) {$u$};
    \node at (2,0.6) {$\bullet$};
    \node at (2,-0.6) {$\bullet$};
    \node at (-1,0) {$\bullet$};

    \node at (4,1) (v1) {$\bullet$};
    \node at (4,-1) (v2) {$\bullet$};
    \node at (5,0) (u) {$\bullet$};
    \draw (v1.center) -- (u.center) -- (v2.center);
    \draw (v1.center) -- (3,1);
    \draw (v2.center) -- (3,-1);
    \draw (u.center) -- (6,0);
    \node at (5,0.3) {$u$};
    \node at (3,1) {$\bullet$};
    \node at (3,-1) {$\bullet$};
    \node at (6,0) {$\bullet$};

    \node at (9,0) (u) {$\bullet$};
    \node at (8,1) (v1) {$\bullet$};
    \node at (8,0) (v2) {$\bullet$};
    \node at (8,-1) (v3) {$\bullet$};
    \draw (v1.center) -- (u.center) -- (v2.center);
    \draw (u.center) -- (v3.center);
    \draw (v1.center) -- (7,1);
    \draw (v2.center) -- (7,0);
    \draw (v3.center) -- (7,-1);
    \node at (9,0.3) {$u$};
    \node at (7,1) {$\bullet$};
    \node at (7,0) {$\bullet$};
    \node at (7,-1) {$\bullet$};

    \draw (0,0) ellipse (0.3cm and 1cm);
    \node at (0,-1.3) {$V_{2i-1}$};
    \draw (1,0) ellipse (0.3cm and 1cm);
    \node at (2,-1.3) {$V_{2i+1}$};
    \draw (2,0) ellipse (0.3cm and 1cm);
    \node at (1,-1.3) {$V_{2i}$};

    \draw (4,0) ellipse (0.3cm and 2cm);
    \node at (4,-2.3) {$V_{2i-1}$};
    \draw (5,0) ellipse (0.3cm and 1cm);
    \node at (5,-1.3) {$V_{2i}$};
    \draw (6,0) ellipse (0.3cm and 1cm);
    \node at (6,-1.3) {$V_{2i+1}$};

    \draw (8,0) ellipse (0.3cm and 2cm);
    \node at (8,-2.3) {$V_{2i-1}$};
    \draw (9,0) ellipse (0.3cm and 1cm);
    \node at (9,-1.3) {$V_{2i}$};

    \node at (1,-3) {Type $1$};
    \node at (5,-3) {Type $2$};
    \node at (8,-3) {Type $3$};
\end{tikzpicture}

\caption{The different types for a vertex $u \in V_{2i}$, when $ k = 3$.}\label{fig3}
\end{figure}

Note that these $k$ types cover all vertices of $V_{2i}$, for each $i \geq 1$, since $G$ is connected. Similarly, we can categorize the vertices of $V_{2i-1}$ for each $i \geq 1$ into two types:

\begin{itemize}
    \item Type $1$: vertices that have exactly $1$ incident edge towards a predecessor and $1$ incident edge towards a successor.

    \item Type $2$: vertices that have $2$ incident edges towards predecessors.
\end{itemize}

Finally, at any time during the labelling process, for any $u \in V$, $\sigma'(u)$ will denote the partial sum of the labels over edges incident to $u$ that have already been labelled. When every edge incident to $u$ has been labelled, $\sigma'(u) = \sigma(u)$.

\section{Labelling process for connected $(k,2)$-bipartite graphs, with $k \geq 3$ odd}
\label{Labellingcon32}

In this section, we will describe the labelling process for each layer $L_i$. We assume that $G$ is a connected $(k,2)$-bipartite graph, with $k \geq 3$ odd, different from the complete bipartite graph $K_{k,2}$. Recall that we are successively labelling $L_p$, then $L_{p-1}$, and so on until $L_1$.

During the labelling process, when we label an edge incident to a vertex $x$ of degree $2$, with some label $\alpha$, then we immediately label the other edge incident to $x$ with $\alpha + 1$. This guarantees that, for each vertex $x$ of degree $2$, $\sigma(x)$ is odd at the end of the labelling.

For each layer $L_i$, we do the following three steps:

\begin{itemize}
    \item Step $1$: we build a set $F_i \subset L_i$, such that the two following properties are satisfied:

        \begin{itemize}
            \item For each vertex $x \in V_{2i-1}$ (i.e., $x$ has degree $2$), $x$ has either its two incident edges in $F_i$, or none of them.

            \item For each vertex $y \in V_{2i}$: if $y$ is of type $j$, then $y$ has $j-1$ incident edges in $E(V_{2i-1},V_{2i}) \cap F_i$. In other words, $y$ has all its incident edges in $E(V_{2i-1},V_{2i})$ that belong to $F_i$, except for one.
        \end{itemize}

    We will use a greedy algorithm to label $F_i$: we aim to label edges along maximal paths that we will explore step-by-step. Every time we go from a vertex in $V_{2i}$ to a vertex in $V_{2i-2}$, we will use the two smallest labels available, and conversely every time we go from a vertex in $V_{2i-2}$ to a vertex in $V_{2i}$, we will use the two largest labels available. In order to do so, we always have to label one edge in $E(V_{2i-2},V_{2i-1})$ as well as its only incident edge in $E(V_{2i-1},V_{2i})$, and we always assign the largest of the two labels to the edge in $E(V_{2i-2},V_{2i-1})$ and the smallest one to the edge in $E(V_{2i-1},V_{2i})$. This will allow us to obtain upper and lower bounds for the values of $\sigma^{\prime}(u)$ in order to guarantee that, at the end of the labelling, for every $u \in V_{2i-2}$ and $v \in V_{2i}$, $\sigma(u) > \sigma(v)$.

    Let us note $m_i = a_i$ and $M_i = a_i + |F_i| - 1$ respectively the smallest available label and the largest available label for the edges in $F_i$.

    We introduce the following definition:

    \begin{definition}
        Let $u,v,w \in G[V_{2i-2}\cup V_{2i-1} \cup V_{2i}]$. We will denote by $u \rightarrow v \rightarrow w$ a path of length $2$ from a vertex $u \in V_{2i-2}$ to a vertex $w \in V_{2i}$, and by $u \leftarrow v \leftarrow w$ a path of length $2$ from a vertex $w \in V_{2i}$ to a vertex $u \in V_{2i-2}$.
    \end{definition}

    We start the labelling by arbitrarily choosing a vertex $u \in V_{2i-2}$ with a neighbor $v \in V_{2i-1}$ such that $uv \in F_i$. If such a $u$ does not exist, then $F_i$ is empty and we can go to Step $2$. Let us denote by $w$ the neighbor of $v$ in $V_{2i}$. We label the edge $uv$ with $M_i$ and the edge $vw$ with $M_i - 1$.

    Formally, after $2j$ edges have been labelled in $F_i$ ($j \geq 1$, meaning this also applies after the initial step we just described), for some $u \in V_{2i-2},v \in V_{2i-1},w \in V_{2i}$:

    \begin{description}
        \item[Case $1$:] If we just labelled edges corresponding to $u \leftarrow v \leftarrow w$, then $j$ is even, and we used the labels $m_i + j - 1$ and $m_i + j - 2$, since we went from a vertex in $V_{2i}$ to a vertex in $V_{2i-2}$. Then:

        \begin{itemize}
            \item[a)] If there exists some $x \in V_{2i-1}$ such that the edge $ux$ is in $F_i$ and is not labelled yet, then we label the edges of $u \rightarrow x \rightarrow y$, where $y$ is the neighbor of $x$ in $V_{2i}$, with the labels $M_i - j$ and $M_i - j - 1$.

            \item[b)] If no such $x$ exists: we arbitrarily pick a vertex $u' \in V_{2i-2}$ such that $u'$ has a neighbor $x \in V_{2i-1}$ such that $u'x \in F_i$ and $u'x$ is not labelled yet, and we label the edges of $u' \rightarrow x \rightarrow y$, where $y$ is the neighbor of $x$ in $V_{2i}$, with the labels $M_i - j$ and $M_i - j - 1$. Note that, if no such $u'$ exists, this means that we have labelled every edge in $F_i$.
        \end{itemize}

        \item[Case $2$:] If we just labelled edges corresponding to $u \rightarrow v \rightarrow w$, then $j$ is odd, and we used the labels $M_i - j + 1$ and $M_i - j$. Then:

        \begin{itemize}
            \item[a)] If there exists some $x \in V_{2i-1}$ such that the edge $wx$ is in $F_i$ and is not labelled yet, then we label the edges of $y \leftarrow x \leftarrow w$, where $y$ is the neighbor of $x$ in $V_{2i-2}$, with the labels $m_i + j$ and $m_i + j - 1$.

            \item[b)] If no such $x$ exists: we arbitrarily pick a vertex $w' \in V_{2i}$ such that $w'$ has a neighbor $x$ such that $w'x \in F_i$ and $w'x$ is not labelled yet, and we label the edges of $y \leftarrow x \leftarrow w'$, where $y$ is the neighbor of $x$ in $V_{2i-2}$, with the labels $m_i+j$ and $m_i+j - 1$. If no such $w'$ exists, then we have labelled every edge in $F_i$.
        \end{itemize}
    \end{description}

    When we reach a vertex $u$ during this labelling, one of two things can happen: either we can extend the current path we are exploring by immediately labelling another edge incident to $u$, or we have to stop the current path on $u$ because it is not possible to extend it. We will explain in both cases how we obtain the upper and lower bounds we are looking for. Informally, we want to ``pair up'' the labels such that the sum of every pair will be either at most $m_i + M_i - 1$ (for $u \in V_{2i}$), or at least $m_i + M_i - 1$ (for $u \in V_{2i-2}$).
    
    If we reach a vertex $z \in V_{2i-2} \cup V_{2i}$ during the labelling of $F_i$, having labelled some edge $zz'$, and we can label another edge $zz''$ incident to $z$, we do so immediately. If we are in Case $1a$, the sum of the two labels (over $zz'$ and $zz''$) is equal to $(m_i + j - 1) + (M_i - j) = m_i + M_i - 1$, and in Case $2a$ the sum is also $(M_i - j) + (m_i + j - 1) = m_i + M_i - 1$. This allows us to keep pairing up the labels over the edges incident to a given vertex.
    
    If we reach a vertex $w \in V_{2i}$, and it is not possible to label another edge incident to $w$, then we are in Case $2b$ and one edge incident to $w$ was just labelled with some $\alpha = M_i - j \leq M_i$. This can only happen once for each $w$, since, if we reach $w$ and it is possible to extend the current path, we do so without stopping. Then, at least one of these two cases is true:

    \begin{itemize}
        \item $w$ has an odd number of incident edges in $F_i$, meaning $w$ has at least one incident edge towards a successor (since $k \geq 3$ is odd). This edge was labelled earlier in $L_{i+1}$, with a label $ \beta < m_i$, and we can pair it with $\alpha$ to obtain $\alpha + \beta \leq m_i + M_i - 1$.

        \item $w$ was chosen earlier during the labelling of $F_i$ to be the new ``start'' of the labelling, with some $\beta = m_i + j'-1$, with $j' < j$. In this case, $\alpha + \beta \leq m_i + M_i - 1$.
    \end{itemize}

    Recall that $k-1$ is even, and once all the edges in $F_i$ have been labelled, every vertex in $V_{2i}$ has exactly $k-1$ edges labelled. Notice that we have, so far, paired up an even number of edges, and hence we have an even number left to pair up (since, again, $k$ is odd). The only labelled edges incident to a vertex $w \in V_{2i}$ that remain unpaired are the edges that were labelled in $L_{i+1}$ (with a label $\beta < m_i$), and the ones that were labelled if $w$ was repeatedly chosen to be the new ``start'' of a path during the labelling of $F_i$, with some labels $m_i + j_1 - 1, m_i + j_2 - 1, \ldots, m_i + j_q - 1$, for some $q$. In this case, since those labels are picked among the ``smallest'' labels during the labelling of $F_i$, their maximal value is $m_i + \frac{\vert F_i \vert}{2} - 1 = m_i + \frac{M_i - m_i - 1}{2}$. We can then arbitrarily pair up two of those labels together and obtain a sum with a value at most $m_i + M_i - 1$.

    Overall, once all the edges in $F_i$ have been labelled, we obtain that, for each $w \in V_{2i}$, $\sigma'(w) \leq (m_i + M_i - 1)\frac{k-1}{2}$.

    Similarly, if we reach a vertex $u \in V_{2i-2}$, and it is not possible to label another edge incident to $u$, then we are in Case $1b$, one edge incident to $u$ was just labelled with some $\alpha = m_i + j - 1 > m_i$. Again, this can only happen once for each $u$, and at least one of these two cases is true:

    \begin{itemize}
        \item $u$ has an odd number of incident edges in $F_i$, meaning (since $k \geq 3$ is odd) that $u$ has at least two incident edges (and at least one in $L_{i-1}$) that are not in $F_i$, and hence that will be labelled once all of $F_i$ is labelled. This means that there is an edge incident to $u$, that belongs either to $E(V_{2i-2},V_{2i-1})$ or to $L_{i-1}$ (if $u$ has at least two incident edges in $L_{i-1}$), that will be labelled with some label $ \beta > M_i$, and we can pair it with $\alpha$ to obtain $\alpha + \beta \geq m_i + M_i - 1$.

        \item $u$ was chosen earlier during the labelling of $F_i$ to be the new (or the original) ``start'' of the labelling, with some $\beta = M_i - j'$ such that $j' < j$, meaning $\alpha + \beta \geq m_i + M_i - 1$.
    \end{itemize}

    For a given vertex $u \in V_{2i-2}$, the labelled edges incident to $u$ that remain unpaired are the ones that were repeatedly chosen to be the new ``start'' of a path during the labelling of $F_i$, with some labels in $K_1 = \{M_i - j'_1, M_i - j'_2, \ldots, M_i - j'_{q'}\}$, for some $q'$. In this case, since those labels are picked among the ``largest'' labels available during the labelling of $F_i$, their minimal value is $m_i + \frac{M_i - m_i + 1}{2}$. Moreover, if $u$ had $k' < k-1$ incident edges labelled during the labelling of $F_i$, $u$ will have $k - 1 - k'$ incident edges labelled after the labelling of $F_i$ (thus excluding the last incident edge in $L_{i-1}$), with labels in some set $K_2$, all greater than $M_i$. We can arbitrarily pair the labels in $K_1 \cup K_2$ together to obtain a set of sums, each with value at least $m_i + M_i - 1$. Overall, once $k-1$ edges incident to $u$ have been labelled, $\sigma'(u) \geq (m_i + M_i - 1) \frac{k-1}{2}$.

    \item Step $2$: there is now exactly one unlabelled edge incident to every vertex in $V_{2i}$. Notice that we have labelled an even number of edges during Step $1$ since every vertex in $V_{2i-1}$ has either $0$ or $2$ incident edges in $F_i$, meaning that the smallest available label $a_i + |F_i|$ for this step is odd, since $a_i$ is odd.

    We sort the $h$ vertices in $V_{2i}$ so that $\sigma'(u_1) < \sigma'(u_2) < \ldots < \sigma'(u_h)$, and iterate from $u_1$ to $u_h$. Let $u_jv$ be the last unlabelled edge incident to $u_j$ for some $j$, and $w$ the neighbor of $v$ in $V_{2i-2}$. Let $\alpha$ and $\alpha + 1$ be the two smallest available labels. Necessarily, $\alpha$ is odd. If $\sigma'(u_j)$ is odd, then we label $u_jv$ with $\alpha$ and $vw$ with $\alpha + 1$, otherwise we label $u_jv$ with $\alpha + 1$ and $vw$ with $\alpha$. This way, we guarantee that, at the end of the process, $\sigma(u_j)$ is even.

    \item Step $3$: we label the two edges incident to each vertex of type $2$ in $V_{2i-1}$ (i.e. to each vertex with $2$ predecessors and no successors) with consecutive labels $\alpha$ and $\alpha + 1$, starting from the smallest available one.
    
\end{itemize}

We will prove in the next section that, at the end of this process, the labelling obtained is antimagic for $G$, provided that $G$ is different from $K_{k,2}$. 

\section{Proof of Theorem \ref{Connected}}
\label{Claims}

We will use the following lemma to suppose that $G \neq K_{k,2}$ in the following (the proof of this lemma can be found in the Appendix).

\begin{restatable}{lemma}{lemmabic}
\label{lemmabic}
    For each $k \geq 3$, the complete bipartite graph $K_{k,2}$ is antimagic.
\end{restatable}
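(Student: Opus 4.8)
The plan is to give an explicit antimagic labelling of $K_{k,2}$ for each $k\ge 3$, handling the parity of $k$ along the way. Write $X=\{x_1,\dots,x_k\}$ for the side of degree~$2$ and $Y=\{y_1,y_2\}$ for the side of degree~$k$, so that $m=|E|=2k$. Each vertex $x_i$ is incident to exactly the two edges $x_iy_1$ and $x_iy_2$, so $\sigma(x_i)=f(x_iy_1)+f(x_iy_2)$, while $\sigma(y_1)=\sum_i f(x_iy_1)$ and $\sigma(y_2)=\sum_i f(x_iy_2)=\binom{2k+1}{2}-\sigma(y_1)$. The key observation is that there are only two ``large'' vertex-sums to worry about, namely $\sigma(y_1)$ and $\sigma(y_2)$, and they automatically add up to $1+2+\dots+2k=k(2k+1)$, which is large; the real work is to make the $k$ values $\sigma(x_1),\dots,\sigma(x_k)$ pairwise distinct and distinct from $\sigma(y_1),\sigma(y_2)$.

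First I would fix a convenient pairing: label the edges so that $\{f(x_iy_1),f(x_iy_2)\}=\{i,\,2k+1-i\}$ for each $i$. Then every $\sigma(x_i)=2k+1$, which is useless, so instead I would perturb this. A cleaner choice: set $f(x_iy_1)=i$ for all $i$, and $f(x_iy_2)=k+i$ for all $i$. Then $\sigma(x_i)=2i+k$, and these are $k$ distinct values (an arithmetic progression with common difference $2$), ranging over $\{k+2,k+4,\dots,3k\}$. Meanwhile $\sigma(y_1)=1+2+\dots+k=\binom{k+1}{2}$ and $\sigma(y_2)=(k+1)+\dots+2k=\binom{k+1}{2}+k^2$. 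So the only possible collisions are between one of $\sigma(y_1),\sigma(y_2)$ and some $\sigma(x_i)$. Since $\sigma(y_2)=\binom{k+1}{2}+k^2\ge 3k+1>3k\ge\sigma(x_i)$ for all $k\ge 1$, $\sigma(y_2)$ is always safe. For $\sigma(y_1)=\binom{k+1}{2}$: for small $k$ this can land inside $\{k+2,\dots,3k\}$ (e.g.\ $k=3$ gives $\sigma(y_1)=6\in\{5,7,9\}$? no, $6\notin$; but one should check $k=4,5,\dots$ carefully), so I would either verify directly that $\binom{k+1}{2}$ is odd-or-even in a way that dodges the parity of the $\sigma(x_i)$, or, whenever a collision occurs, swap the labels on a single pair $x_jy_1\leftrightarrow x_jy_2$ to shift $\sigma(x_j)$ and $\sigma(y_1)$ by the same amount $\pm(k)$ (actually by $\pm(2k+1-2i)$ type quantities), breaking the tie while introducing no new one because there is slack. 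A slicker route to avoid all case analysis: exploit parity — choose the labelling so that every $\sigma(x_i)$ is odd while $\sigma(y_1),\sigma(y_2)$ are forced to have a controlled parity, mirroring the ``odd for degree-$2$ vertices'' idea used in the main construction.

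Concretely, the parity trick I would try first: pair the labels of $y_1,y_2$ on each $x_i$ as one odd and one even label, i.e.\ choose a perfect matching between the odd labels $\{1,3,\dots,2k-1\}$ and the even labels $\{2,4,\dots,2k\}$ and let that define $\{f(x_iy_1),f(x_iy_2)\}$ for each $i$; then each $\sigma(x_i)$ is odd. Now pick the matching (and the choice of which of the two is $f(x_iy_1)$) so that the $k$ odd sums are distinct — for instance match odd label $2i-1$ with even label $2i$, giving $\sigma(x_i)=4i-1$, all distinct, in $\{3,7,\dots,4k-1\}$. Finally, assign the smaller of $\{2i-1,2i\}$ to $x_iy_1$ for half the $i$'s and to $x_iy_2$ for the other half, tuning $\sigma(y_1)$ to be even; since $\sigma(y_1)+\sigma(y_2)=k(2k+1)$, the parity of $\sigma(y_1)$ can be forced by a single swap whenever needed, and an even value cannot equal any of the odd $\sigma(x_i)$. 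This disposes of $\sigma(y_1)$; and $\sigma(y_2)=k(2k+1)-\sigma(y_1)$ is then also even (if $k$ is even) or needs one more check (if $k$ odd, $k(2k+1)$ is odd so $\sigma(y_2)$ is odd and one must separately ensure $\sigma(y_2)\notin\{3,\dots,4k-1\}$, which holds once $k$ is large enough because $\sigma(y_2)\ge$ roughly $k^2$, with the finitely many small cases $k=3,5$ checked by hand).

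The main obstacle is exactly this last point: squeezing the two ``hub'' sums $\sigma(y_1),\sigma(y_2)$ out of the range occupied by the $k$ leaf-sums, uniformly in $k$. For $k\ge 4$ the hub sums are of order $k^2$ while the leaf-sums are of order $k$, so asymptotically there is no conflict and a crude bound closes it; the genuine case work is confined to $k\in\{3,4,5\}$ (and perhaps $k=6$), which I would simply exhibit explicitly as labelled graphs. I would therefore structure the proof as: (1) state the explicit labelling for general $k$; (2) compute $\sigma(x_i)$, $\sigma(y_1)$, $\sigma(y_2)$; (3) observe the leaf-sums are distinct by construction; (4) use a parity argument to separate $\sigma(y_1)$ (and $\sigma(y_2)$ when $k$ is even) from the leaf-sums; (5) use a size bound $\sigma(y_j)\ge c k^2 > \max_i\sigma(x_i)$ for $k\ge k_0$ to finish, performing a single label-swap to fix parity or to resolve a residual tie when necessary; (6) dispatch the finitely many small $k$ by an explicit table.
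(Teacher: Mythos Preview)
Your proposal is correct and lands on essentially the same construction as the paper: the pairing $\{f(x_iy_1),f(x_iy_2)\}=\{2i-1,2i\}$, giving $\sigma(x_i)=4i-1$, all distinct and all $\equiv 3\pmod 4$. The paper's finish is simply tidier than your planned tuning-and-swapping: it assigns all even labels to one hub and all odd labels to the other, so that $\sigma(y_1)=k(k+1)$ and $\sigma(y_2)=k^2$; then $k^2\equiv 0$ or $1\pmod 4$ disposes of $\sigma(y_2)$ against the leaf-sums by pure parity, and the single size bound $k(k+1)\ge 4k>4k-1$ for $k\ge 3$ disposes of $\sigma(y_1)$, with no swaps and no small-case table needed.
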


We now aim to prove our main result:

\Connected*

\begin{proof}
    Let $G=(V = X \cup Y, E)$ be such a graph. We label each layer $L_i$ of $G$, for $i$ from $p$ to $1$, as described in Section \ref{Labellingcon32}. We will use the following claims to show this result.

    \begin{claim}
        For each $1 \leq i \leq p$, $\sigma(u)$ is odd for each vertex $u \in V_{2i-1}$, and $\sigma(u)$ is even for each vertex $u \in V_{2i}$.
    \end{claim}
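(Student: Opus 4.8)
The plan is to verify the parity claim by following the labelling process step by step and tracking, for each vertex, the parities of the labels placed on its incident edges. The key structural fact to exploit is the rule stated in Section~\ref{Labellingcon32}: whenever an edge incident to a degree-$2$ vertex $x \in V_{2i-1}$ is labelled with some $\alpha$, the other edge incident to $x$ is immediately labelled with $\alpha+1$. So for every $x \in V_{2i-1}$, the two labels on its incident edges are consecutive integers, hence $\sigma(x) = \alpha + (\alpha+1) = 2\alpha+1$ is odd. This handles all odd-indexed layers at once and requires essentially no case analysis.

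\medskip
\noindent The substantive part is showing $\sigma(u)$ is even for each $u \in V_{2i}$. First I would fix $u \in V_{2i}$ and note that its $k$ incident edges get labelled across three moments: (i) some are labelled while processing layer $L_{i+1}$ (the edges towards successors in $V_{2i+1}$), (ii) exactly $j-1$ of its $j$ edges towards predecessors are labelled during Step~$1$ of layer $L_i$ (as part of $F_i$), and (iii) the single remaining edge towards a predecessor is labelled during Step~$2$ of layer $L_i$. For the edges handled in groups (i) and (ii), each is, by the degree-$2$ rule applied at the relevant vertex of $V_{2i+1}$ or $V_{2i-1}$, paired with a consecutive label on an edge \emph{not} incident to $u$; the point is simply that the label $f(e)$ on such an edge $e$ incident to $u$ can be of either parity, so groups (i) and (ii) together contribute some value $S$ to $\sigma'(u)$ of unknown parity, over $k-1$ edges. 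Then Step~$2$ is designed precisely to fix this: it sorts the vertices of $V_{2i}$ and, for $u=u_j$, inspects the parity of the partial sum $\sigma'(u_j)$ accumulated from groups (i) and (ii); if $\sigma'(u_j)$ is odd it assigns the \emph{odd} label $\alpha$ to the last edge $u_jv$, and if $\sigma'(u_j)$ is even it assigns the \emph{even} label $\alpha+1$. In either case the total $\sigma(u_j) = \sigma'(u_j) + f(u_jv)$ becomes even. The two smallest available labels at that point being consecutive with $\alpha$ odd (which I would justify from the observation already made in the text that $a_i$ is odd and an even number of edges were labelled in Step~$1$) is what makes both parities available to choose from.

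\medskip
\noindent To make this airtight I would also observe an inductive ingredient: when we process $L_i$, all of $L_{i+1}, \dots, L_p$ have already been fully labelled, so at the start of Step~$2$ of $L_i$ the quantity $\sigma'(u)$ for $u \in V_{2i}$ indeed equals the sum over exactly its $k-1$ already-labelled incident edges (groups (i) and (ii)), and nothing incident to $u$ is labelled afterward except in Step~$2$ of $L_i$ itself. One edge case is the root $r \in V_0$, which the claim does not cover (consistent with the earlier remark that $\sigma(r)$ is allowed to collide with nothing); and a vertex $u \in V_{2i}$ of type $k$ has no successors, so group (i) is empty, but the argument is unchanged. I do not expect a real obstacle here — the claim is essentially a bookkeeping restatement of design decisions already baked into Steps~$1$ and~$2$ — but the one place to be careful is ensuring that Step~$1$ really does leave exactly one unlabelled predecessor-edge at each $u \in V_{2i}$ (this is the defining property of $F_i$ stated in Step~$1$) and that the availability of an odd smallest label $\alpha$ in Step~$2$ is correctly inherited from the parity of $a_i$ and the even count of Step~$1$ labels.
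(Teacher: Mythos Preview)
Your proposal is correct and follows essentially the same approach as the paper's proof: consecutive labels on degree-$2$ vertices force their sums to be odd, and the explicit parity choice in Step~$2$ forces the sums at vertices of $V_{2i}$ to be even. The paper's version is simply a two-sentence pointer to Steps~$1$--$3$ and Step~$2$, whereas you spell out the bookkeeping (groups (i)--(iii), the parity of the smallest available label, edge cases like $r$ and type-$k$ vertices) in more detail.
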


    \begin{proof}
        Step $2$ guarantees the result for each vertex $u \neq r$ of degree $k$ (note that, since $i \geq 1$, $u \in V_{2i}$ necessarily implies that $u \neq r$). Moreover, the fact that the edges incident to every vertex of degree $2$ are labelled with consecutive integers in Steps 1 to 3 guarantees the result for each vertex of degree $2$.\qed
    \end{proof}

    This claim justifies that it is impossible to have $\sigma(u) = \sigma(v)$ for some $u$ with degree $2$ and some $v \neq r$ with degree $k$ once the labelling is done.

    \begin{claim}
        For $v_1$ and $v_2$ two vertices with degree $2$, $\sigma(v_1) \neq \sigma(v_2)$.
    \end{claim}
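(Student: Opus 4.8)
The plan is to exploit the feature highlighted at the start of Section~\ref{Labellingcon32}: by construction, the two edges incident to any degree-$2$ vertex always receive two consecutive labels $\alpha$ and $\alpha+1$, so that $\sigma(v) = 2\alpha+1$ is odd. First I would verify that this invariant really is maintained in each of the three steps. In Step~1, the pairs labelled inside $F_i$ are of the form $u \rightarrow v \rightarrow w$ or $u \leftarrow v \leftarrow w$ with $v \in V_{2i-1}$, and the two labels used on the edges $uv$ and $vw$ are consecutive; moreover the defining property of $F_i$ forces every $x \in V_{2i-1}$ to have \emph{both} or \emph{neither} of its two edges in $F_i$, so no degree-$2$ vertex of $V_{2i-1}$ ends up with one edge labelled in Step~1 and the other elsewhere. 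In Step~2, for each $u_j \in V_{2i}$ we label the remaining edge $u_j v$ together with the edge $vw$ at the same $v \in V_{2i-1}$, again with $\alpha$ and $\alpha+1$. In Step~3, the two edges at each type-$2$ vertex of $V_{2i-1}$ are explicitly given consecutive labels. Since every degree-$2$ vertex of $G$ lies in some $V_{2i-1}$ and both of its edges lie in the single layer $L_i$, one of these three steps handles it, and it ends up with edge-labels $\{\alpha, \alpha+1\}$ for some $\alpha$.

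Next I would invoke bipartiteness. Because $k \geq 3$, the vertices of degree $2$ are precisely those of $Y$, and every edge of $G$ has exactly one endpoint in $Y$; hence two distinct degree-$2$ vertices have disjoint sets of incident edges. Writing $\sigma(v_1) = 2\alpha_1 + 1$ and $\sigma(v_2) = 2\alpha_2 + 1$, where $\alpha_i$ is the smaller label on an edge at $v_i$, the labels $\alpha_1$ and $\alpha_2$ occur on disjoint edge sets, so since $f$ is a bijection we have $\alpha_1 \neq \alpha_2$. Therefore $\sigma(v_1) \neq \sigma(v_2)$.

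I do not expect a real obstacle in this claim; the only thing needing care is the bookkeeping of the first paragraph — making sure that no degree-$2$ vertex escapes all three steps, and that the ``twin'' edge of a degree-$2$ vertex is never split off into a different layer (which cannot happen, since both edges at a vertex of $V_{2i-1}$ belong to $L_i$). Everything else is immediate.
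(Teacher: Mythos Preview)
Your proposal is correct and follows exactly the paper's approach: the paper's proof simply reads ``Obvious from the construction given in Steps 1 to 3, since we label the two edges incident to each vertex of degree $2$ using consecutive values.'' You have merely unpacked this one sentence, verifying step by step that every degree-$2$ vertex receives a consecutive pair $\{\alpha,\alpha+1\}$ and then observing that distinct degree-$2$ vertices, having disjoint incident-edge sets, receive distinct values of $\alpha$ and hence distinct sums $2\alpha+1$.
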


    \begin{proof}
        Obvious from the construction given in Steps 1 to 3, since we label the two edges incident to each vertex of degree $2$ using consecutive values.\qed
    \end{proof}

    We have proven that two vertices with degree $2$ can never have the same value of $\sigma$, at the end of our algorithm. We now aim to prove a similar result for vertices with degree $k$, except $r$.

    \begin{claim}
        For each $1 \leq i \leq p$ and each $u_1,u_2 \in V_{2i}$, we have $\sigma(u_1) \neq \sigma(u_2)$.
    \end{claim}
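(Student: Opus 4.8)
The plan is to show that, for two distinct vertices $u_1, u_2 \in V_{2i}$, their final vertex-sums differ, using the fact that $\sigma(u_1)$ and $\sigma(u_2)$ are determined by two contributions: the labels received in layers $L_{i+1}, \ldots, L_p$ (i.e.\ the partial sum $\sigma'$ right before Step $2$ of layer $L_i$) together with the single label added in Step $2$ of layer $L_i$. Observe first that no edge incident to a vertex of $V_{2i}$ is ever labelled during the processing of layers $L_1, \ldots, L_{i-1}$, nor during Step $3$ of layer $L_i$; so once Step $2$ of $L_i$ is complete, the value of $\sigma(u_j)$ is fixed. Hence it suffices to analyse Step $2$ of layer $L_i$.

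First I would recall the setup of Step $2$: the vertices of $V_{2i}$ are sorted as $\sigma'(u_1) < \sigma'(u_2) < \cdots < \sigma'(u_h)$ (these partial sums are pairwise distinct after Step $1$ — this needs to be argued, presumably it follows from the parity/ordering bookkeeping established in the description of Step $1$, or it can be arranged), and we process them in this order, each time assigning to the last unlabelled edge at $u_j$ either $\alpha$ or $\alpha+1$, where $\alpha, \alpha+1$ are the two currently smallest available labels, the choice being dictated by the parity of $\sigma'(u_j)$ so that $\sigma(u_j)$ ends up even. The key point is that the label added to $u_j$ is strictly increasing in $j$: when we move from $u_j$ to $u_{j+1}$, two labels ($\alpha$ and $\alpha+1$) have been consumed, so the smallest available label has increased by exactly $2$. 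Therefore the amount added to $u_{j+1}$ is at least (the amount added to $u_j$) $+ 1$, and in fact $\sigma(u_{j+1}) \geq \sigma'(u_{j+1}) + (\text{label added to } u_j) + 1 > \sigma'(u_j) + (\text{label added to } u_j) + 1 > \sigma(u_j)$, using $\sigma'(u_{j+1}) > \sigma'(u_j)$. So $\sigma(u_1) < \sigma(u_2) < \cdots < \sigma(u_h)$, which gives the claim.

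The step I expect to require the most care is justifying that the partial sums $\sigma'(u_j)$ are pairwise distinct (or at least that the final sums come out distinct even when some $\sigma'$ values coincide). If the sort in Step $2$ only produces a weak order $\sigma'(u_1) \le \cdots \le \sigma'(u_h)$, then ties must be broken, and one must check that two vertices $u_j, u_{j+1}$ with $\sigma'(u_j) = \sigma'(u_{j+1})$ still end up with distinct sums: this holds because the labels added differ by at least $1$ while they could differ by exactly $1$ — but since both final sums are forced to be even, a difference of exactly $1$ in the added labels combined with equal partial sums would give a contradiction only if the parities work out, so one has to check that equal $\sigma'$ values get labels of the same parity class and thus differ by $2$, or handle the $\pm 1$ correction carefully. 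I would resolve this by noting that when $\sigma'(u_j) = \sigma'(u_{j+1})$ they have the same parity, hence receive the ``same role'' ($\alpha$ vs.\ $\alpha+1$) in the two consecutive steps; concretely the label added to $u_{j+1}$ exceeds that added to $u_j$ by exactly $2$, so $\sigma(u_{j+1}) = \sigma(u_j) + 2 \ne \sigma(u_j)$. Once this bookkeeping is in place, the monotonicity argument above closes the claim; the argument is essentially local to Step $2$ and does not interact with the more delicate bounds from Step $1$, which are only needed for the separate claim comparing $V_{2i-2}$ with $V_{2i}$.
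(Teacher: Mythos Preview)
Your proof is correct and follows essentially the same approach as the paper's: sort $V_{2i}$ by partial sum, observe that the label added in Step~2 is strictly increasing along this order, and conclude that the final sums are strictly increasing. Your worry about ties in $\sigma'$ is unnecessary --- the paper simply takes $\sigma'(u_2)\le\sigma'(u_1)$ (weak), notes that the Step~2 labels satisfy $\beta<\alpha$ by processing order regardless of ties, and concludes $\sigma(u_2)=\sigma'(u_2)+\beta\le\sigma'(u_1)+\beta<\sigma'(u_1)+\alpha=\sigma(u_1)$; no distinctness of the $\sigma'$ values is needed (and indeed your own chain of inequalities already works with $\ge$ in place of $>$).
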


    \begin{proof}
        Let $u_1, u_2 \in V_{2i}$. Without loss of generality, assume that $\sigma^{\prime}(u_2) \leq \sigma^{\prime}(u_1)$ when Step $2$ begins for $L_i$. The last unlabelled edge incident to $u_1$ (resp. to $u_2$) is labelled by some $\alpha$ (resp. by some $\beta$) during Step $2$ for $L_i$, in which it must be highlighted once again that we sort the vertices of $V_{2i}$ by increasing order of $\sigma^{\prime}$ (which simply implies that $\alpha > \beta$). Then, $\sigma(u_2) < \sigma(u_1)$ follows from $\sigma^{\prime}(u_2) \leq \sigma^{\prime}(u_1)$, since we have $\sigma(u_2) = \sigma^{\prime}(u_2) + \beta \leq \sigma^{\prime}(u_1) + \beta < \sigma^{\prime}(u_1) + \alpha = \sigma(u_1)$.\qed
    \end{proof}

    \begin{claim}\label{claimDegree3}
        For each $1 \leq j < i \leq p$, $u_1 \in V_{2i}$, $u_2 \in V_{2j}$, we have $\sigma(u_1) \neq \sigma(u_2)$.
    \end{claim}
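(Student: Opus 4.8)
The plan is to prove the stronger statement that $\sigma(u_1) < \sigma(u_2)$ whenever $u_1 \in V_{2i}$ and $u_2 \in V_{2j}$ with $1 \le j < i \le p$; this of course implies $\sigma(u_1) \ne \sigma(u_2)$. The guiding intuition is that the reserved intervals force the labels used on a deeper layer to be strictly smaller than those used on a shallower one, so a vertex lying in a deeper even layer should end up with a strictly smaller vertex-sum. First I would record two consequences of the interval construction: since the layers are allotted their intervals in the order $L_p, L_{p-1}, \dots, L_1$, consecutively and from smallest label to largest, we have $a_{i-1} = b_i + 1$ and, more generally, $a_q \ge a_{i-1} = b_i + 1$ for every $q \le i-1$. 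Also, every edge incident to $u_1 \in V_{2i}$ lies in $L_i$ or $L_{i+1}$, so each of the $k$ labels around $u_1$ is at most $b_i$.

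I would then split into two cases. If $j \le i-2$, then every edge incident to $u_2 \in V_{2j}$ lies in $L_j$ or $L_{j+1}$ with $j, j+1 \le i-1$, so every label around $u_2$ is at least $b_i + 1$; since $u_1$ and $u_2$ both have degree $k$, this gives $\sigma(u_2) \ge k(b_i+1) > k b_i \ge \sigma(u_1)$, and we are done in this case.

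The delicate case is $j = i-1$, where $i \ge 2$, $u_2 \in V_{2i-2}$ is not $r$, and $u_2$ genuinely has edges in the same layer $L_i$ as some edges of $u_1$, so the crude interval bound no longer separates them. Here I would appeal to the quantitative bounds established during Step $1$ of layer $L_i$. On the one hand, once Step $1$ of $L_i$ is finished we have $\sigma'(u_1) \le (m_i + M_i - 1)\frac{k-1}{2}$, and the unique remaining edge incident to $u_1$ is labelled during Step $2$ of $L_i$ with a label at most $b_i$; hence $\sigma(u_1) \le (m_i + M_i - 1)\frac{k-1}{2} + b_i$. On the other hand, the edges of $u_2$ pointing into $V_{2i-1}$ are all labelled while $L_i$ is processed, whereas its $p \ge 1$ predecessor-edges lie in $L_{i-1}$ and hence are labelled only while $L_{i-1}$ is processed; thus the last edge of $u_2$ to be labelled lies in $L_{i-1}$ and carries a label $\ge a_{i-1} = b_i + 1$. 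Using the Step $1$ lower bound $\sigma'(u_2) \ge (m_i + M_i - 1)\frac{k-1}{2}$, which holds as soon as $k-1$ edges incident to $u_2$ have been labelled, and then adding the final label, we obtain $\sigma(u_2) \ge (m_i + M_i - 1)\frac{k-1}{2} + b_i + 1 > \sigma(u_1)$.

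Combining the two cases gives $\sigma(u_1) < \sigma(u_2)$ and hence the claim. I expect the case $j = i-1$ to be the main obstacle: one has to be careful about which edges incident to $u_2$ are labelled during which phase, so as to guarantee both that the very last label placed on $u_2$ really comes from the strictly larger interval $[a_{i-1};b_{i-1}]$ and that the hypothesis ``$k-1$ edges already labelled'' needed for the Step $1$ lower bound is met; and one must apply the $V_{2i}$ upper bound and the $V_{2i-2}$ lower bound with the parameters $m_i, M_i$ of the \emph{same} layer $L_i$. Everything else is routine bookkeeping with the reserved intervals.
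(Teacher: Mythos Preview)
Your proof is correct and follows essentially the same approach as the paper: both arguments reduce to the case $j=i-1$ and use the Step~1 bounds $\sigma'(u_1)\le (m_i+M_i-1)\frac{k-1}{2}$ and $\sigma'(u_2)\ge (m_i+M_i-1)\frac{k-1}{2}$, together with the fact that the last edge of $u_1$ receives a label $\le b_i$ while the last edge of $u_2$ lies in $L_{i-1}$ and receives a label $>b_i$. The only cosmetic difference is that for $j<i-1$ the paper appeals to monotonicity (``$\sigma(u_2)$ can only increase''), whereas you give the explicit crude bound $\sigma(u_2)\ge k(b_i+1)>kb_i\ge\sigma(u_1)$; also, avoid reusing the letter $p$ for the number of predecessor-edges, since the paper already uses $p$ for the number of layers.
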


    \begin{proof}
        To begin with, assume that $j = i - 1$. Let $u_1 \in V_{2i}$ and $u_2 \in V_{2i-2}$. Recall that edges in $L_i$ are labelled with $\{a_i,\ldots, b_i\}$. As mentioned in Section \ref{Labellingcon32}, after Step $1$ is done (for the layer $L_i$), we have that $\sigma^{\prime}(u_1) \leq (m_i + M_i - 1)\frac{k-1}{2}$, and once $k-1$ edges incident to $u_2$ have been labelled, $\sigma'(u_2) \geq (m_i + M_i - 1)\frac{k-1}{2}$.

Moreover, at the end of the labelling, we have $\sigma(u_1) \leq b_i + \sigma^{\prime}(u_1)$, since the last unlabelled edge incident to $u_1$ is labelled during the labelling of $L_i$, and $\sigma(u_2) > b_i + \sigma^{\prime}(u_2) \geq \sigma(u_1)$, since the last unlabelled edge incident to $u_2$ is labelled during the labelling of $L_{i-1}$ (as we made sure that the last unpaired edge incident to each such $u_2$ is in $L_{i-1}$ when establishing the lower bound on $\sigma^{\prime}(u_2)$). The same arguments can easily be extended to the case where $j < i-1$, because the value of $\sigma(u_2)$ can only increase in this case.\qed \end{proof}

The proof of the next claim is provided in the Appendix:

    \begin{claim}\label{Claimroot}
        For each vertex $u$ such that $u \neq r$, $\sigma(r) > \sigma(u)$ if $G$ is not $K_{k,2}$.
    \end{claim}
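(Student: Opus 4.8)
The claim is the last ingredient needed for Theorem~\ref{Connected}: together with the preceding claims it shows that all vertex-sums are pairwise distinct. I would prove it by exhibiting a lower bound on $\sigma(r)$ and matching it against upper bounds on $\sigma(u)$ according to the degree of $u$. Two structural remarks set the stage. First, every edge incident to $r$ lies in $L_1$, so $\sigma(r)$ is a sum of $k$ distinct labels from the top interval $[a_1;b_1]=[m-2k+1;m]$; in particular $\sigma(r)\ge ka_1+\binom{k}{2}$. Second, since $G$ is connected and simple and $r\in V_0$ has degree $k$, $V_1$ consists of exactly the $k$ neighbours of $r$, each of degree $2$, and the second edge of such a neighbour necessarily goes to $V_2$ (it cannot remain inside $V_1$, nor be a second edge to $r$); hence every vertex of $V_1$ is of type $1$, Step~3 is vacuous for $L_1$, and a direct count shows that $r$ has exactly $k-|V_2|$ incident edges used in $F_1$ and exactly $|V_2|$ incident edges used in Step~2 of $L_1$ (together accounting for all $k$ edges at $r$).

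The core is a sharper lower bound on $\sigma(r)$. During the labelling of $F_1$ each edge at $r$ is paired with an edge of $E(V_1,V_2)$ and, by the rule that puts the largest label on the $E(V_{2i-2},V_{2i-1})$ edge, it receives the larger label of its pair; moreover the greedy procedure consumes the labels of $F_1$ two at a time, always a consecutive pair, and these pairs are precisely $\{a_1,a_1+1\},\{a_1+2,a_1+3\},\dots,\{M_1-1,M_1\}$. Hence the edges at $r$ in $F_1$ carry exactly $a_1+1,a_1+3,\dots,M_1$ and contribute $(k-|V_2|)a_1+(k-|V_2|)^2$ to $\sigma(r)$. In Step~2 the remaining $2|V_2|$ labels of $[M_1+1;b_1]$ are distributed in consecutive pairs, of each of which $r$ receives one member, contributing at least $|V_2|(M_1+|V_2|)$. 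Adding the two contributions and substituting $M_1=a_1+2(k-|V_2|)-1$ and $a_1=m-2k+1$, everything collapses to
\[
\sigma(r)\ \ge\ ka_1+k^2-|V_2|\ =\ km-k^2+k-|V_2| .
\]

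Now I would beat the two kinds of competitors. If $u$ has degree $2$, then $\sigma(u)\le b_1+(b_1-1)=2m-1$, so it suffices that $km-k^2+k-|V_2|>2m-1$; using $|V_2|\le |X|=m/k$ and $k^2-2k-1>0$ (as $k\ge 3$) this reduces to $m(k^2-2k-1)>k(k^2-k-1)$, and since $G$ is a connected $(k,2)$-bipartite graph with $k$ odd that is not $K_{k,2}$, we have $|X|$ even and $|X|\ge 4$, hence $m\ge 4k$, after which the inequality becomes the elementary $3k^2-7k-3>0$, true for every $k\ge 3$. If $u$ has degree $k$ and $u\ne r$, write $u\in V_{2i}$ with $i\ge 1$: for $i\ge 2$ all $k$ edges of $u$ carry labels at most $b_i\le b_2=a_1-1$, whence $\sigma(u)\le k(a_1-1)<ka_1\le \sigma(r)$; for $i=1$, after Step~1 of $L_1$ we have $\sigma'(u)\le (m_1+M_1-1)\frac{k-1}{2}$ and the last edge at $u$ gets a label $\le b_1$ in Step~2, so $\sigma(u)\le (m_1+M_1-1)\frac{k-1}{2}+b_1=ka_1+k^2-|V_2|(k-1)$, and therefore $\sigma(r)-\sigma(u)\ge |V_2|(k-2)>0$, using $|V_2|\ge 1$ (the distance partition of a connected $(k,2)$-bipartite graph always reaches level $2$) and $k\ge 3$.

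The step I expect to demand the most care is the label bookkeeping in the second paragraph: one must verify that, across all the restarts of the greedy path in Step~1 (Cases~$1b$ and $2b$), the pairs of labels used inside $F_1$ really are the consecutive pairs $\{a_1+2t,a_1+2t+1\}$, and that the $E(V_0,V_1)$-edge of each such pair --- which is exactly an edge at $r$ --- always gets the larger of the two labels. This is forced by the shape of the procedure: the very first pair is taken from the top of $[a_1;M_1]$, every top-down move $u\to v\to w$ is immediately followed by a bottom-up move, and a restart in Case~$2b$ is itself a bottom-up move, so the procedure strictly alternates between drawing a consecutive pair from the top and from the bottom of $[a_1;M_1]$ until that interval is exhausted. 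Granting this, the remaining inequalities are routine substitutions.
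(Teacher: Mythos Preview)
Your proof is correct and complete. The paper's own proof takes a different, more combinatorial route for the crucial case $u\in V_2$: instead of computing explicit bounds for $\sigma(r)$ and $\sigma(u)$ separately, it pairs the labels directly through the common neighbours in $V_1$. Specifically, if $u$ has $j$ edges to $V_3$ (labels $\lambda_1<\dots<\lambda_j$) and $k-j$ edges to $V_1$ (labels $\lambda_{j+1}<\dots<\lambda_k$), then the $k-j-1$ Step-1 edges among the latter force the matching edges at $r$ to carry $\lambda_{j+1}+1,\dots,\lambda_{k-1}+1$, the single Step-2 edge at $u$ has label $\lambda_k$ with its partner at $r$ carrying $\lambda_k\pm 1$, and the remaining $j$ labels at $r$ all lie in $L_1$ while $\lambda_1,\dots,\lambda_j$ lie in $L_2$; summing gives $\sigma(r)\ge\sigma(u)+k-2$ in one line, without ever evaluating either sum. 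Your approach instead extracts the sharp lower bound $\sigma(r)\ge ka_1+k^2-|V_2|$ by tracking which half of each consecutive pair lands on $r$ in Steps~1 and~2, and then subtracts the upper bound on $\sigma(u)$ already established in the description of Step~1; this is more computational and requires the alternation check in your final paragraph, but produces a reusable numerical inequality. For degree-$2$ vertices the paper is content with the crude three-term estimate $\sigma(r)\ge(m-1)+(m-3)+(m-5)=3m-9$ together with $m\ge 3k\ge 9$, whereas your full $k$-term bound and the reduction via $|X|\ge 4$, $m\ge 4k$ work equally well. One minor simplification available to you: since each vertex of $V_2$ has a predecessor in $V_1$ and $|V_1|=k$, you have $|V_2|\le k$ directly, which avoids the detour through $|V_2|\le m/k$.
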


    We have proven, with these claims, that, at the end of the algorithm, all values $\sigma(u)$ for $u \in V$ are different, meaning we do have an antimagic labelling of $G$.\qed \end{proof}

\section{Labelling for non-connected $(k,2)$-bipartite graphs}
\label{Uncon32}

In this section, we will consider that $G = (V,E)$ is a $(k,2)$-bipartite graph, with $k \geq 3$ odd, and $l \geq 2$ connected components $C_1 \sqcup C_2 \sqcup \ldots \sqcup C_l$, each rooted at some vertex $r_i$, $1 \leq i \leq l$, of degree $k$. For every $i$, we define $C_i = (V_i,E_i)$. Let $R = \{r_1, r_2, \ldots, r_l\}$. We assume that $|V_1| \leq |V_2| \leq \ldots \leq |V_l|$. For each $i$ and $j$, we will use $V_i^j$ to denote the set of vertices $u \in V_i$ such that the distance from $r_i$ to $u$ is $j$.

For each connected component, we iteratively apply the labelling described in Section \ref{Labellingcon32}, using the smallest available labels for $C_1$, the next smallest labels for $C_2$, and so on.

We easily obtain that, for any two distinct vertices $u,v \in V$ such that $u,v \notin R$, $\sigma(u) \neq \sigma(v)$. The only issue that could arise is to guarantee that, for every $r_i \in R$, $\sigma(r_i)$ is different from all the others $\sigma(w)$ for $w \in V$.

An important property is that, once this initial labelling is done, we have for every $v_1,v_2$ vertices of degree $2$, $|\sigma(v_1) - \sigma(v_2)| \geq 4$. This allows us to switch some labels, as shown in the Appendix, to prove the following theorem:

\Unconnected*

\section{Antimagic labelling for $(k,2)$-bipartite graphs, with $k \geq 4$ even}

In this section, let $G=(V,E)$ be a $(k,2)$-bipartite graph, with $k \geq 4$ even.

The outline of the labelling process is the same as the one described in Section \ref{Labellingcon32}, except for the labelling of $F_i$ during Step $1$.

Let us consider that we have to label some $F_i$, inside a layer $L_i$, with labels in $[a_i; b_i]$, where $a_i$ is odd and $b_i = a_i + |L_i|-1$. We will first label all edges of $F_i$ that are in $E(V_{2i-1},V_{2i})$, then label the other edges of $F_i$, i.e., the ones that are in $E(V_{2i-2},V_{2i-1})$. We will distinguish two cases:

\begin{itemize}
    \item Case $1$: $|F_i| \mod 4 = 2$. This means that $|F_i|/2$ is odd. Hence, we can simply apply the general idea we described, by labelling all edges in $E(V_{2i-1},V_{2i}) \cap F_i$ with labels arbitrarily chosen in $[a_i; a_i + |F_i|/2 - 1]$. Since $|F_i|/2$ and $a_i$ are odd, the smallest available label after this process will be $a_i + |F_i|/2$, an even number. Moreover, every edge labelled during this step is incident to some vertex $x_j \in V_{2i-1}$ of degree $2$.
    
    We then sort the $x_j$'s by increasing order of $\sigma'$, and we label every unlabelled edge incident to each $x_j$ in this order (they are all in $E(V_{2i-2},V_{2i-1}) \cap F_i$) with the smallest labels available. Note that, since $a_i + |F_i|/2$ is even, $\sigma(x_1)$ is an odd number at the end of the labelling process, and, for every $j \geq 2$, $\sigma(x_{j}) = \sigma(x_{j-1}) + 2$. Once $F_i$ has been labelled, the remainder of the labelling process - Steps $2$ and $3$ - is  the same as the one described in Section \ref{Labellingcon32}.

    For each $u \in V_{2i}$ and each $v \in V_{2i-2}$, we have $\sigma(v) > \sigma(u)$ by construction. Finally, for any two vertices $x_1,x_2 \in V_{2i-1}$ such that both $x_1$ and $x_2$ have their two incident edges in $F_i$, by construction we have either $\sigma(x_1) > \sigma(x_2)$ (if $\sigma'(x_1) > \sigma'(x_2)$) or $\sigma(x_1) < \sigma(x_2)$ (otherwise), and hence $\sigma(x_1) \neq \sigma(x_2)$.

    \item Case $2$: $|F_i| \mod 4 = 0$. In this case, $|F_i|/2$ is even.

    To solve this issue, we will label exactly one additional edge (that will be in $E(V_{2i-2},V_{2i-1}) \cap F_i$) while labelling all the edges of $E(V_{2i-1},V_{2i}) \cap F_i$, in order to make sure that the smallest available label after this step is even.

    More formally, let us first recall that $t_i^k$ is the number of vertices  of type $k$ in $V_{2i}$ (meaning they all have $k$ predecessors and no successor).

    If $t_i^k = 0$, we do the following in Step $1$ of the labelling:

    \begin{itemize}
        \item Take an arbitrary vertex $u \in V_{2i}$ such that $u$ has at least one incident edge $uv$ in $F_i$ ( $F_i$ is empty if such a $u$ does not exist). Label $uv$ with $a_i$, then label the other edge incident to $v$ with $a_i + 1$.
        \item Arbitrarily label all the remaining edges in $F_i \cap E(V_{2i-1},V_{2i})$ with the labels in $[a_i + 2; a_i + \frac{|F_i|}{2}]$.
        \item Label all the unlabelled edges in $F_i \cap E(V_{2i-2},V_{2i-1})$, using the labels in $[a_i + \frac{|F_i|}{2}+1;a_i + |F_i|-1]$ in increasing order, by sorting the vertices in $V_{2i-1}$ by increasing order of $\sigma'$ as in Case $1$.
    \end{itemize}

    Steps $2$ and $3$ are then identical to the ones described in Section \ref{Labellingcon32}.

    Since there are no vertices of type $k$ in $V_{2i}$, every vertex in $V_{2i}$ has at least one edge towards a successor, labelled during the labelling of $L_{i+1}$ with some $\alpha < a_i$. Then, for every vertex $v \in V_{2i-2}$, $v$ has:
    
    \begin{itemize}
        \item $1$ incident edge labelled by some $\alpha' \geq a_i + 1$,
        \item $k-2$ incident edges labelled by some $\beta_j \geq a_i + |F_i|/2 + 1$,
        \item $1$ incident edge in $L_{i-1}$ labelled by some $\gamma > b_i$.
    \end{itemize}
    
    We obtain that, for each $u \in V_{2i}$ and each $v \in V_{2i-2}$, $\sigma(u) < \sigma(v)$ once the labelling (Steps 1 to 3) is done.

    In the following, we will consider that $t_i^k > 0$. Let us arbitrarily order the $q = t_i^k$ vertices of $V_{2i}$ of type $k$: $u_1, u_2, \ldots, u_q$.

    Step $1$ of the labelling will consist of the following substeps (see Figure \ref{Case2illus} for an illustration):

    \begin{enumerate}
        \item Let $vu_1$ be an edge in $F_i$. We label it with $a_i$, then we label the other edge incident to $v \in V_{2i-1}$ with $a_i + 1$.

        \item We then label an edge (in $F_i$) incident to $u_2$, one (in $F_i$) incident to $u_3$, and so on, until one incident edge is labelled for every $u_j$, for $1 \leq j \leq q$. We repeat this step, visiting the $u_j$'s in order, until all $q(k-1)$ edges incident to the $u_j$'s in $F_i$ have been labelled.

        \item We arbitrarily label the unlabelled edges in $E(V_{2i-1},V_{2i}) \cap F_i$ with the smallest labels available.

        \item Label all the unlabelled edges in $E(V_{2i-2},V_{2i-1}) \cap F_i$, using the available labels in increasing order, by sorting the vertices in $V_{2i-1}$ by increasing order of $\sigma'$ as in Case $1$.
    \end{enumerate}

    Steps $2$ and $3$ are then identical to the ones described in Section \ref{Labellingcon32}.

    \begin{figure}[h!]
        \centering
        \begin{tikzpicture}[scale=0.55]
            \node at (0,0) (x1) {$\bullet$};
            \node at (0,-1) (x2) {$\bullet$};
            \node at (0,-2) (x3) {$\bullet$};
            \node at (0,-3) (x4) {$\bullet$};
            \node at (0,-4) (x5) {$\bullet$};
            \node at (0,-5) (x6) {$\bullet$};
            \node at (0,-6) (x7) {$\bullet$};
            \node at (0,-7) (x8) {$\bullet$};
            \node at (2.5,-1.5) (u1) {$\bullet$};
            \node at (2.5,-5.5) (u2) {$\bullet$};

            \draw (u1.center) edge["$1$",sloped] (x1.center);
            \draw (u1.center) edge["$4$",sloped] (x2.center);
            \draw (u1.center) edge["$6$",sloped] (x3.center);
            \draw (u1.center) edge[dashed] (x4.center);
            \draw (u2.center) edge["$3$",sloped] (x5.center);
            \draw (u2.center) edge["$5$",sloped] (x6.center);
            \draw (u2.center) edge["$7$",sloped] (x7.center);
            \draw (u2.center) edge[dashed] (x8.center);
            \draw (x1.center) edge["$2$",sloped] (-2,0);
            \draw (x2.center) edge (-2,-1);
            \draw (x3.center) edge (-2,-2);
            \draw (x4.center) edge[dashed] (-2,-3);
            \draw (x5.center) edge (-2,-4);
            \draw (x6.center) edge (-2,-5);
            \draw (x7.center) edge (-2,-6);
            \draw (x8.center) edge[dashed] (-2,-7);

            \draw[->, line width=1mm] (3,-3.5) -- (4,-3.5);

            \node at (7,0) (x1) {$\bullet$};
            \node at (7,-1) (x2) {$\bullet$};
            \node at (7,-2) (x3) {$\bullet$};
            \node at (7,-3) (x4) {$\bullet$};
            \node at (7,-4) (x5) {$\bullet$};
            \node at (7,-5) (x6) {$\bullet$};
            \node at (7,-6) (x7) {$\bullet$};
            \node at (7,-7) (x8) {$\bullet$};
            \node at (9.5,-1.5) (u1) {$\bullet$};
            \node at (9.5,-5) (u2) {$\bullet$};

            \draw (u1.center) edge["$1$",sloped] (x1.center);
            \draw (u1.center) edge["$4$",sloped] (x2.center);
            \draw (u1.center) edge["$6$",sloped] (x3.center);
            \draw (u1.center) edge[dashed] (x4.center);
            \draw (u2.center) edge["$3$",sloped] (x5.center);
            \draw (u2.center) edge["$5$",sloped] (x6.center);
            \draw (u2.center) edge["$7$",sloped] (x7.center);
            \draw (u2.center) edge[dashed] (x8.center);
            \draw (x1.center) edge["$2$",sloped] (5,0);
            \draw (x2.center) edge["$9$",sloped] (5,-1);
            \draw (x3.center) edge["$11$",sloped] (5,-2);
            \draw (x4.center) edge[dashed] (5,-3);
            \draw (x5.center) edge["$8$",sloped] (5,-4);
            \draw (x6.center) edge["$10$",sloped] (5,-5);
            \draw (x7.center) edge["$12$",sloped] (5,-6);
            \draw (x8.center) edge[dashed] (5,-7);

            \node at (2.5,-8) {$V_{2i}$};
            \node at (9.5,-8) {$V_{2i}$};
            \node at (0,-8) {$V_{2i-1}$};
            \node at (7,-8) {$V_{2i-1}$};

        \end{tikzpicture}
        \caption{Case $|F_i| \mod 4 = 0$, with $k = 4$ and $t_i^k = 2$. In dashed: edges not in $F_i$. It is assumed here that $a_i = 1$ for the sake of clarity.}
        \label{Case2illus}
    \end{figure}

    Once again, we guarantee that, for any two vertices $x_1,x_2 \in V_{2i-1}$, $\sigma(x_1) \neq \sigma(x_2)$, and both are odd. For any two vertices $u_1,u_2 \in V_{2i}$, we also guarantee that $\sigma(u_1) \neq \sigma(u_2)$, and both are even. Moreover, for each vertex $u \in V_{2i}$, we have, once the labelling is done:

    $(i)$ If $u$ is of type $k' < k$, then $u$ has at least one successor, and, as in the case where $t_i^k = 0$, $\sigma(u) < \sigma(v)$ for each $v \in V_{2i-2}$ once the labelling is done.

    $(ii)$ If $u$ is of type $k$:

    $\sigma(u) \leq (a_i + t_i^k) + (a_i + 2t_i^k) + \ldots + (a_i + (k-1)t_i^k) + b_i$

    Indeed, since we visit the $u_j$'s in order during the second step of the labelling of $F_i$ (see above), the difference between two consecutive labels incident to $u$ is exactly $q = t_i^k$.

    Overall, we obtain:

    $\sigma(u) \leq (k-1)a_i + b_i + \frac{(k-1)k}{2}t_i^k$

    We also have, for every vertex $v \neq r \in V_{2i-2}$:

    $\sigma(v) \geq (a_i + 1) + (a_i + \frac{|F_i|}{2} + 1) + (a_i + \frac{|F_i|}{2} + 2) + \ldots + (a_i + \frac{|F_i|}{2} + k - 2) + (b_i + 1)$

    The term $a_i + 1$ comes from the first step described above, as some edge in $E(V_{2i-2},V_{2i-1})$ is labelled with $a_i + 1$. The next smallest label that can be assigned to an edge in $E(V_{2i-2},V_{2i-1})$ is $a_i + \frac{|F_i|}{2} + 1$. Necessarily, $\frac{|F_i|}{2} \geq (k-1)t_i^k$, since every vertex of type $k$ will have $(k-1)$ incident edges in $F_i$, with egality if and only if $V_{2i}$ only contains vertices of type $k$.

    We obtain:

    $\sigma(v) \geq (k-1)a_i + b_i + (k-2)(k-1)t_i^k + \frac{(k-1)(k-2)}{2} + 2$.

    We can thus compare the values of $\sigma(v)$ and $\sigma(u)$:

    $\sigma(v) - \sigma(u) \geq (k-1)t_i^k(k/2 - 2) + \frac{(k-1)(k-2)}{2} + 2$.

    Since $k \geq 4$, $k/2 - 2 \geq 0$, and hence we necessarily have $\sigma(v) > \sigma(u)$.

    We also easily obtain that, for each $u \in V$, $u \neq r$, $\sigma(r) > \sigma(u)$ since $k \geq 4$.

    Since the claims described in Section \ref{Claims} still hold, we have constructed an antimagic labelling for $G$. Note that the connected property is important in this case, as we cannot use the same idea we described in Section \ref{Uncon32}, due to the fact that in this case we do not guarantee that, for any two vertices $x_1,x_2$ of degree $2$, we have $|\sigma(x_1) - \sigma(x_2)| \geq 4$. This yields:
    
\end{itemize}

\ConnectedkEven*

\bibliographystyle{splncs04}
\bibliography{biblio}

\section*{Appendix}

  \begin{claim}
        For each vertex $u$ such that $u \neq r$, $\sigma(r) > \sigma(u)$ if $G$ is not $K_{k,2}$.
\end{claim}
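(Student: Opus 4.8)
The plan is to reduce the claim to comparing $\sigma(r)$ only with $\sigma(x)$ for $x\in V_1$ and with $\sigma(w)$ for $w\in V_2$, and then to carry out those two comparisons by writing down explicitly which labels the construction places on the edges of the last layer $L_1$.

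\textbf{Reduction and structure.} Every edge incident to $r$ lies in $L_1$. Since $G$ is simple, each vertex of $V_1$ is joined to $r$ by exactly one edge and has exactly one further neighbour, necessarily in $V_2$; hence $|V_1|=k$, the layer $L_1$ has $2k$ edges, and by construction these get the $2k$ largest labels $\{m-2k+1,\dots,m\}$. Consequently every degree‑$2$ vertex outside $V_1$ has both incident labels at most $b_2<a_1$, and every degree‑$k$ vertex in some $V_{2i}$ with $i\ge2$ has $\sigma<\max_{V_2}\sigma$ by an iteration of Claim~\ref{claimDegree3}; so it suffices to treat $x\in V_1$ and $w\in V_2$. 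Next, $G=K_{k,2}$ exactly when $|V_2|=1$ (if $|V_2|=1$ the unique vertex of $V_2$ absorbs all $k$ edges leaving $V_1$, so it has degree $k$ with no other neighbour and $G$ is complete bipartite); so we may set $t:=|V_2|\ge2$. Since $\sum_{w\in V_2}\mathrm{type}(w)=|E(V_1,V_2)|=k$ with every summand $\ge1$, every vertex of $V_2$ has type $\le k-t+1\le k-1$, and $|F_1\cap E(V_1,V_2)|=k-t$. Finally $G\ne K_{k,2}$ forces $|X|\ge3$, hence $m=k|X|\ge3k$.

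\textbf{The labels at $r$.} Because $V_0=\{r\}$, the greedy walk built in Step~$1$ inside $F_1$ can only oscillate $r\to V_1\to V_2\to V_1\to r\to\cdots$: each outward leg $r\to v\to w$ consumes the two currently largest free labels and each inward leg $w\to x\to r$ the two currently smallest, and both put the larger of their two labels on the edge meeting $r$. Hence, of the $k-t$ edges of $r$ in $F_1$, the $\lceil(k-t)/2\rceil$ lying on outward legs receive $M_1,M_1-2,\dots$ and the $\lfloor(k-t)/2\rfloor$ lying on inward legs receive $a_1+1,a_1+3,\dots$; and each of the remaining $t$ edges of $r$ is labelled in Step~$2$ with a label $\ge M_1+1=m-2t+1$ (Step~$3$ labels no edge of $L_1$, since $V_1$ has no type‑$2$ vertex). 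Summing these contributions and inserting $M_1=m-2t$, $a_1=m-2k+1$, a short computation gives the clean bound $\sigma(r)\ge k(m-k)+(k-t)$.

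\textbf{The two comparisons, and the obstacle.} For $x\in V_1$: either its two edges form a consecutive Step‑$2$ pair, the largest possibility being $\{m-1,m\}$, so $\sigma(x)\le2m-1$, or they form an $F_1$‑pair, so $\sigma(x)\le2M_1-1<2m-1$; and $k(m-k)+(k-t)>2m-1$ is equivalent to $(k-2)m>k^2-k+t-1$, which holds since $m\ge3k$, $t\le k$ and $k\ge3$. For $w\in V_2$ of type $j\le k-t+1$: its $j-1$ edges in $F_1$ carry at most the $j-1$ largest among the $k-t$ ``smaller‑of‑pair'' labels used in $F_1$, of which at most $\lceil(k-t)/2\rceil$ are of the form $M_1-1,M_1-3,\dots$ and any further ones are $\le a_1+(k-t)-2$; adding the single Step‑$2$ label ($\le m$) and the $k-j$ labels in $L_2$ ($\le a_1-1$ each) bounds $\sigma(w)$. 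The tightest case is $j=k-t+1$, where the bound works out so that $\sigma(r)-\sigma(w)$ is at least $t(2k-t-1)-k$, which is positive for every $2\le t\le k$ once $k\ge3$; for smaller $j$ one trades an $F_1$‑label ($\ge a_1$) for an $L_2$‑label ($\le a_1-1$), which only lowers $\sigma(w)$, so that case follows. The genuinely delicate point is precisely this last estimate: one must use that at most half of $w$'s $F_1$‑edges can carry a ``large'' label (the naive bound treating them all as $\approx M_1$ fails for large $k$), together with the fact that $r$ alone picks up $t\ge2$ labels from the top band $\{m-2t+1,\dots,m\}$, which no type‑$(\le k-1)$ vertex of $V_2$ can fully access; keeping track of which ``smaller‑of‑pair'' labels land on $E(V_1,V_2)$ (hence can feed some $\sigma(w)$) rather than on the edges at $r$ is the main bookkeeping burden.
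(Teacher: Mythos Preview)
Your argument is correct, but it takes a substantially different route from the paper's.

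The paper never computes $\sigma(r)$ or $\sigma(w)$ separately. Instead it fixes the single vertex $u\in V_2$ with the largest $\sigma$, and pairs up the $k$ edges of $u$ with the $k$ edges of $r$ one by one: each edge $uv$ with $v\in V_1$ is matched with the edge $rv$ through the \emph{same} vertex $v$, and because the construction always labels the two edges at any $v\in V_1$ with consecutive integers, placing the larger one on the $r$-side in Step~1, these two labels differ by exactly $+1$ (or $\pm1$ for the single Step~2 edge). The remaining $j$ edges of $u$ go to $V_3$ and carry labels in $L_2$, while the remaining $j$ edges of $r$ carry labels in $L_1$, so each of the latter strictly dominates each of the former. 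Adding up gives $\sigma(r)\ge\sigma(u)+(k-1)-1>\sigma(u)$ with essentially no computation; the degree-$2$ case is then handled by the crude bound $\sigma(r)\ge(m-1)+(m-3)+(m-5)>2m-1$ using $m\ge3k\ge9$.

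Your approach, by contrast, exploits the fact that $V_0=\{r\}$ to pin down \emph{exactly} which labels the walk in Step~1 deposits on the edges at $r$, deduces the explicit lower bound $\sigma(r)\ge k(m-k)+(k-t)$, and then separately upper-bounds $\sigma(w)$ for $w\in V_2$ by letting $w$ grab the largest available labels. The bookkeeping you flag as ``delicate'' --- tracking how many of $w$'s $F_1$-labels can be of the large kind $M_1-1,M_1-3,\dots$ --- is precisely what the paper sidesteps via the edge-by-edge matching through $V_1$. What your method buys is a quantitative gap ($\sigma(r)-\sigma(w)\ge t(2k-t-1)-k$) that the paper's soft argument does not yield; what it costs is the explicit arithmetic and the case analysis on $j$ and on the parity of $k-t$, none of which the paper needs.
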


\begin{proof}

    Recall that $d_G(r) = k$ with $k \geq 3$ odd. Let $u$ be the vertex in $V_2$ with the maximal value of $\sigma$. Let $\lambda_1 < \lambda_2 < \ldots \lambda_j$ be the labels of the edges $uv$, with $v \in V_3$. Let $\lambda_{j+1} < \lambda_{j+2} < \ldots < \lambda_k$ be the labels of the edges $uv$, with $v \in V_1$.

    Since, whenever we label an edge incident to a vertex $v \in V_1$, we immediately label the second edge incident to $v$ using two consecutive labels, we know that $\lambda_{j+1} + 1, \lambda_{j+2} + 1, \ldots, \lambda_{k-1} + 1$ are labels over edges incident to $r$. Moreover, we also know that either $\lambda_k + 1$ or $\lambda_k - 1$ is the label of an other edge incident to $r$, since the choice between the two is made depending on the parity of $\sigma'(u)$ when labelling the last unlabelled edge incident to $u$.

    Let us denote by $\mu_1, \mu_2, \ldots, \mu_j$ the $j$ other labels of $r$. For $1 \leq i \leq j$, the edge labelled by $\lambda_i$ is in $L_2$, and the one labelled by $\mu_i$ is in $L_1$: hence, $\mu_i > \lambda_i$.

    Overall, we obtain that $\sigma(r) \geq \sigma(u) + (k-1) - 1 \geq \sigma(u) + (k-2) > \sigma(u)$ since $k \geq 3$. Moreover, since for each $v \in V_{2j}$ such that $j > 1$, we have $\sigma(v) < \sigma(u)$, we obtain $\sigma(v) < \sigma(r)$.
    
    Finally, for each vertex $v$ of degree $2$, we have $\sigma(v) \leq 2m-1$, so $\sigma(r) \geq (m-1)+(m-3)+(m-5)=3m-9 > 2m-1 \geq \sigma(v)$, since $m \geq 3k \geq 9$. \qed
    
        \end{proof}

\lemmabic*
\begin{proof}

    We claim that the following labelling of $K_{k,2}$ is antimagic:

    \begin{center}
    
    \begin{tikzpicture}
    \node at (0,0) (r) {$\bullet$};
    \node at (3,3) (v1) {$\bullet$};
    \node at (3,1.5) (v2) {$\bullet$};
    \node at (3,0) {$\vdots$};
    \node at (3,-1.5) (vk) {$\bullet$};
    \node at (6,0) (u) {$\bullet$};
    \draw (r.center) -- (v1.center) -- (u.center) -- (v2.center) -- (r.center) -- (vk.center) -- (u.center);

    \node at (0,0.3) {$r$};
    \node at (3,3.3) {$v_1$};
    \node at (3,1.8) {$v_2$};
    \node at (3,-1.8) {$v_k$};
    \node at (6,0.3) {$u$};

    \node at (1.5,1.8) {$2$};
    \node at (1.7,0.6) {$4$};
    \node at (1.5,-1.1) {$2k$};
    \node at (4.5,1.8) {$1$};
    \node at (4.3,0.6) {$3$};
    \node at (4.5,-1.1) {$2k-1$};
    \end{tikzpicture}
    \end{center}

    Indeed, for each $1 \leq i \leq k$, $\sigma(v_i) = 4i - 1$, and hence $\sigma(v_i) \equiv (3 \mod 4)$. Moreover, $\sigma(r) = k(k+1)$ and $\sigma(u) = k^2 \neq \sigma(r)$. Since we have $k^2 \equiv (0 \mod 4)$ if $k$ is even, and $k^2 \equiv (1 \mod 4)$ if $k$ is odd, it necessarily implies that $\sigma(u) \neq \sigma(v_i)$ for each $i$. Finally, if $k \geq 3$ then $\sigma(r) = k^2 + k \geq 4k > \sigma(v_i)$ for each $i$. \qed
\end{proof}

\Unconnected*

\begin{proof}

We will note $[A_i,B_i]$ the set of labels for the edges in $C_i$. One can immediately notice that:

\begin{itemize}
    \item $A_1 = 1$, $B_1 = |E_1|$.
    \item For any $1 \leq i \leq l - 1$, $A_{i+1} = B_i + 1$ and $B_{i+1} = B_i + |E_{i+1}| = |E_1| + |E_2| + \ldots + |E_{i+1}|$.
\end{itemize}

Let $i$ be the smallest index such that there exists a vertex $x \in V \setminus R$ such that $\sigma(r_i) = \sigma(x)$ (if $i$ does not exist, then the labelling is antimagic).

Necessarily, $x$ is a vertex of degree $2$, and $\sigma(r_i)$ is odd. To obtain an antimagic labelling of $G$, the key idea will be to switch up some labels in order to increase $\sigma(r_i)$ by $2$, while keeping the set of values $\Sigma = \{\sigma(v) | v\in V \text{ such that } d_G(v) = 2\}$ unchanged, guaranteeing that $\sigma(r_i)$ cannot be equal to $\sigma(v)$ anymore for some vertex $v$ of degree $2$, thanks to the fact that, after the initial labelling is done, for any two vertices $v_1,v_2$ of degree $2$, we have $|\sigma(v_1) - \sigma(v_2)| \geq 4$.

Let $p$ be the maximum distance from $r_{i+1}$ in $C_{i+1}$. Let $y$ be the vertex in $V^2_i$ such that $\sigma(y)$ is maximal among the vertices in $V^2_i$. Since the vertices are sorted by increasing order of $\sigma'$ in Step $3$ in the labelling described in Section \ref{Labellingcon32}, this means that $y$ is incident to the edge with the largest label in $E(V_i^1,V_i^2)$, meaning $y$ is incident to an edge labelled by either $B_i$ or $B_i-1$. Let $w$ be the vertex in $V^p_{i+1}$ such that $w$ is incident to the edge labelled with the smallest label in $E(V_{i+1}^{p-1},V_{i+1}^p)$. Note that, since the vertices in $V_{i+1}^p$ cannot have any successors, Steps $1$ and $3$ imply that, actually, $w$ is necessarily incident to the edge labelled with the smallest label used for $C_{i+1}$, i.e., with $B_i + 1 = A_{i+1}$.

We will distinguish two cases, depending on if the vertices in $V^p_{i+1}$ (and hence also $w$) have degree $2$ or $k$.

\begin{itemize}
    \item Case $1$: $w$ has degree $2$.

    In this case, $w$ is necessarily incident to the edge labelled with $B_i + 2$, and $w$ is the vertex with the minimal value of $\sigma$ among the vertices in $V_{i+1}^p$. Let $u_1$ and $u_2$ be the two neighbors of $w$ (see Figure \ref{32case1} for an illustration):

    \begin{figure}[h!]
        \centering
        \begin{tikzpicture}[scale=0.7]
            \node at (0,0) (r) {$\bullet$};
            \node at (2,2) (u1) {$\bullet$};
            \node at (2,0) (u2) {$\bullet$};
            \node at (2,-2) (u3) {$\bullet$};
            \node at (4,-2) (u4) {$\bullet$};
            \node at (4,-5) (w) {$\bullet$};
            \node at (2,-4) (v1) {$\bullet$};
            \node at (2,-6) (v2) {$\bullet$};

            \draw (r.center) edge (u1.center);
            \draw (r.center) edge (u2.center);
            \draw (r.center) edge["$B_i$",sloped] (u3.center);
            \draw (u3.center) edge["$B_i-1$"] (u4.center);
            \draw[dashed] (u1.center) edge (4,2);
            \draw[dashed] (u2.center) edge (4,0);
            \draw (v1.center) edge["$B_i+1$",sloped] (w.center);
            \draw (w.center) edge["$B_i+2$",sloped] (v2.center);
            \draw[dashed] (v1.center) edge (0,-3.5);
            \draw[dashed] (v1.center) edge (0,-4.5);
            \draw[dashed] (v2.center) edge (0,-5.5);
            \draw[dashed] (v2.center) edge (0,-6.5);

            \node at (-0.2,0.2) {$r_i$};
            \node at (4.2,-4.8) {$w$};
            \node at (4.2,-2.2) {$y$};

            \node at (2,-3.7) {$u_1$};
            \node at (2,-6.3) {$u_2$};

            \draw[->, line width=1mm] (5,-2.5) -- (6,-2.5);

            \node at (7,0) (r) {$\bullet$};
            \node at (9,2) (u1) {$\bullet$};
            \node at (9,0) (u2) {$\bullet$};
            \node at (9,-2) (u3) {$\bullet$};
            \node at (11,-2) (u4) {$\bullet$};
            \node at (11,-5) (w) {$\bullet$};
            \node at (9,-4) (v1) {$\bullet$};
            \node at (9,-6) (v2) {$\bullet$};

            \draw (r.center) edge (u1.center);
            \draw (r.center) edge (u2.center);
            \draw (r.center) edge["$B_i+2$",sloped] (u3.center);
            \draw (u3.center) edge["$B_i+1$"] (u4.center);
            \draw[dashed] (u1.center) edge (11,2);
            \draw[dashed] (u2.center) edge (11,0);
            \draw (v1.center) edge["$B_i-1$",sloped] (w.center);
            \draw (w.center) edge["$B_i$",sloped] (v2.center);
            \draw[dashed] (v1.center) edge (7,-3.5);
            \draw[dashed] (v1.center) edge (7,-4.5);
            \draw[dashed] (v2.center) edge (7,-5.5);
            \draw[dashed] (v2.center) edge (7,-6.5);

            \node at (6.8,0.2) {$r_i$};
            \node at (11.2,-4.8) {$w$};
            \node at (11.2,-2.2) {$y$};

            \node[color=blue] at (6.8,-0.4) {$+2$};
            \node[color=blue] at (11.2,-2.6) {$+2$};
            \node[color=red] at (9,-2.4) {$+4$};
            \node[color=red] at (11.1,-5.3) {$-4$};
            \node[color=purple] at (9,-4.4) {?};
            \node[color=purple] at (9,-6.8) {?};

            \node at (9,-3.7) {$u_1$};
            \node at (9,-6.3) {$u_2$};

            \node at (-2,0) {$C_i$};
            \node at (-2,-5) {$C_{i+1}$};
        \end{tikzpicture}
        \caption{Case $1$, $k = 3$}
        \label{32case1}
    \end{figure}

    We can then switch the pair of labels $(B_i, B_i - 1)$ with $(B_i + 2, B_i + 1)$, and ``regenerate'' the labelling for $C_{i+1}$ (using the labels in $\{B_i + 3, B_i + 4, \ldots, B_{i+1}\}$), using the same process as the one described in Section \ref{Labellingcon32}. We need to regenerate the labelling because we have changed the values of $\sigma(u_1)$ and $\sigma(u_2)$. However, during the labelling process, this only changes the order in which vertices are treated during Step $2$, and hence by regenerating the labelling in $C_{i+1}$ we guarantee that the claims shown in Section \ref{Claims} still hold.

    Notice that, since we are switching two pairs of consecutive labels, the set $\Sigma$ is unchanged, and, for any two vertices $v_1,v_2$  of degree $2$, $|\sigma(v_1) - \sigma(v_2)| \geq 4$. The only meaningful changes are $\sigma(r_i)$, $\sigma(y)$, $\sigma(u_1)$ and $\sigma(u_2)$. Specifically, $\sigma(r_i)$ is increased by $2$, guaranteeing it cannot be equal to any other $\sigma(z)$, for any vertex $z$. Moreover, $\sigma(y)$ is also increased by $2$, which does not matter since $y$ was the vertex with the highest value of $\sigma$ among the vertices in $V^2_i$, and $\sigma(y)$ remains even. Notice that, although we have changed $\sigma(u_1)$ and $\sigma(u_2)$, we still necessarily have $\sigma(r_i) < \sigma(u_1)$ and $\sigma(r_i) < \sigma(u_2)$.

    \item Case $2$ : $w$ has degree $k$ (see Figure \ref{32case2} for an illustration):

    \begin{figure}[h!]
        \centering
        \begin{tikzpicture}[scale=0.7]
            \node at (0,0) (r) {$\bullet$};
            \node at (2,2) (u1) {$\bullet$};
            \node at (2,0) (u2) {$\bullet$};
            \node at (2,-2) (u3) {$\bullet$};
            \node at (4,-2) (u4) {$\bullet$};
            \node at (4,-5) (w) {$\bullet$};
            \node at (2,-3.5) (v1) {$\bullet$};
            \node at (2,-5) (v2) {$\bullet$};
            \node at (2,-6.5) (v3) {$\bullet$};
            \node at (0,-3.5) (x1) {$\bullet$};

            \draw (r.center) edge (u1.center);
            \draw (r.center) edge (u2.center);
            \draw (r.center) edge["$B_i$",sloped] (u3.center);
            \draw (u3.center) edge["$B_i-1$"] (u4.center);
            \draw[dashed] (u1.center) edge (4,2);
            \draw[dashed] (u2.center) edge (4,0);
            \draw (v1.center) edge["$B_i+1$",sloped] (w.center);
            \draw (w.center) edge (v2.center);
            \draw (w.center) edge (v3.center);
            \draw (x1.center) edge["$B_i + 2$"] (v1.center);
            \draw[dashed] (v2.center) edge (0,-5);
            \draw[dashed] (v3.center) edge (0,-6.5);

            \node at (-0.2,0.2) {$r_i$};
            \node at (4.2,-4.8) {$w$};
            \node at (4.2,-2.2) {$y$};

            \node at (2.2,-3.2) {$v_1$};
            \node at (2.2,-4.7) {$v_2$};
            \node at (2.2,-6.8) {$v_3$};
            \node at (-0.2,-3.2) {$u$};

            \draw[->, line width=1mm] (5,-2.5) -- (6,-2.5);

            \node at (7,0) (r) {$\bullet$};
            \node at (9,2) (u1) {$\bullet$};
            \node at (9,0) (u2) {$\bullet$};
            \node at (9,-2) (u3) {$\bullet$};
            \node at (11,-2) (u4) {$\bullet$};
            \node at (11,-5) (w) {$\bullet$};
            \node at (9,-3.5) (v1) {$\bullet$};
            \node at (9,-5) (v2) {$\bullet$};
            \node at (9,-6.5) (v3) {$\bullet$};
            \node at (7,-3.5) (x1) {$\bullet$};

            \draw (r.center) edge (u1.center);
            \draw (r.center) edge (u2.center);
            \draw (r.center) edge["$B_i+2$",sloped] (u3.center);
            \draw (u3.center) edge["$B_i+1$"] (u4.center);
            \draw[dashed] (u1.center) edge (11,2);
            \draw[dashed] (u2.center) edge (11,0);
            \draw (v1.center) edge["$B_i-1$",sloped] (w.center);
            \draw (w.center) edge (v2.center);
            \draw (w.center) edge (v3.center);
            \draw (x1.center) edge["$B_i$"] (v1.center);
            \draw[dashed] (v2.center) edge (7,-5);
            \draw[dashed] (v3.center) edge (7,-6.5);

            \node at (6.8,0.2) {$r_i$};
            \node at (11.2,-4.8) {$w$};
            \node at (11.2,-2.2) {$y$};
            \node at (9.2,-3.2) {$v_1$};
            \node at (9.2,-4.7) {$v_2$};
            \node at (9.2,-6.8) {$v_3$};
            \node at (6.8,-3.2) {$u$};

            \node at (-2,0) {$C_i$};
            \node at (-2,-5) {$C_{i+1}$};
        \end{tikzpicture}
        \caption{Case $2$, $k = 3$}
        \label{32case2}
    \end{figure}

    Again, we want to switch the pair of labels $(B_i, B_i - 1)$ and $(B_i + 1, B_i + 2)$. As in the previous case, this guarantees that $\sigma(r_i)$ cannot be in conflict with any other vertex, and increases $\sigma(y)$ by $ 2$, which does not create any additional issue.

    However, we have to be more careful when regenerating the labelling for $C_{i+1}$, as guaranteeing good upper bounds (for vertices in $V_{i+1}^p$) and lower bounds (for vertices in $V_{i+1}^{p-2}$) is slightly harder than in Case $1$.

    Notice that, if $C_{i+1} = K_{k,2}$ (and hence $C_{i} = K_{k,2}$ also), then we have an explicit knowledge of the values of $\sigma$, using Lemma \ref{lemmabic}: $\sigma(r_i) = k(k+1) + 2(i-1)k^2$, and $\sigma(y) = k^2 + 2ik^2 = \sigma(r_i) + 2k^2 - k$. We can then simply switch the pair of labels $(B_i + 1, B_i + 2)$ with $(B_i, B_i - 1)$, without creating any additional issues. We will thus consider in the following that $C_{i+1} \neq K_{k,2}$.

    In the following, for the sake of simplicity, we will denote by $F$ the set of edges that we label during Step 1 (see Section \ref{Labellingcon32}) for the layer of $C_{i+1}$ that consists of the edges of $G[V_{i+1}^{p-2}\cup V_{i+1}^{p-1} \cup V_{i+1}^p]$.

   Let us explain how we construct $F$ in this case.

    First, notice that every vertex in $V_{i+1}^p$ is of type $k$, which means $F$ will be made of $(k-1)$ edges incident to every vertex in $V_{i+1}^p$ (along with the corresponding edges in $E(V_{i+1}^{p-2},V_{i+1}^{p-1})$). We distinguish two cases:

    \begin{itemize}
        \item{Case 2.1:} There exists at least one vertex $u \in V_{i+1}^{p-2}$ of type $1 < t < k$, meaning $u$ has at least $2$ predecessors and at least $1$ successor. Recall that $C_{i+1} \neq K_{k,2}$, meaning every vertex in $V_{i+1}^{p-2}$ has at least one predecessor.

        In this case, let us define $v \in V_{i+1}^{p-1}$ a successor of $u$, and $w'$ the neighbor of $v$ in $V_{i+1}^p$. There exists a path $u \rightarrow v \rightarrow w'$, and we put $uv$ and $vw'$ in $F$. We then greedily complete $F$ such that the properties described in Step $1$ from Section \ref{Labellingcon32} still hold. In particular, since $w'$ has $k-1 \geq 2$ incident edges in $F$, $w'$ has a neighbour $v' \in V_{i+1}^{p-1}$, different from $v$, and $v'$ has a neighbour $u' \in V_{i+1}^{p-2}$ (which may be equal to $u$), such that $w'v' \in F$ and $u'v' \in F$. We then start the labelling of $F$ on $u'$ (instead of starting from an arbitrary vertex in $V_{i+1}^{p-2}$), with $m_i = B_i + 1$ and $M_i = B_i + 2(k-1)|V_{i+1}^p|$, by labelling the edge $u'v'$ by $M_i$ and the edge $v'w'$ by $M_i-1$. Then, we label the edge $vw'$ by $m_i=B_i+1$ (instead of labelling by $m_i$ an arbitrary edge incident to $w'$ in $F$) and the edge $uv$ by $m_i+1=B_i+2$, and finally we complete the labelling of $F$ by following Step $1$ from Section \ref{Labellingcon32}.

        We then switch the pair of labels $(B_i + 1, B_i + 2)$ and $(B_i - 1, B_i)$: since this decreases the value of $\sigma'(w')$, this means that the upper bound of $(m_i + M_i - 1)\frac{k-1}{2}$ will still be verified. This also decreases the value of $\sigma'(u)$: however, since $u$ has at least two predecessors, we can pair the label $B_i = m_i-1$ with the label of one of these two edges towards predecessors (which is greater than $M_i$), and obtain a sum at least equal to $m_i + M_i - 1$, so that $\sigma'(u) \geq (m_i + M_i - 1)\frac{k-1}{2}$ still holds after $k-1$ edges incident to $u$ have been labelled. Moreover, $\sigma(u)$ is obtained by adding the label of the last edge incident to $u$, that is also incident to a predecessor of $u$.

        Overall, we have guaranteed the necessary lower and upper bounds, and can proceed with the re-labelling of $C_{i+1}$, without any additional issues.

        \item{Case 2.2:} Every vertex in $V_{i+1}^{p-2}$ is either of type $1$ or $k$. As vertices of type $k$ have no edges towards any vertex in $V_{i+1}^{p-1}$, there must exist at least one vertex of type $1$ in $V_{i+1}^{p-2}$, and we will only consider such vertices.

        Let $u$ be a vertex of $V_{i+1}^{p-2}$ such that $u$ is of type $1$ and has at least $2$ distinct vertices in its second neighborhood in $V_{i+1}^{p}$. Such a vertex $u$ does exist, as each vertex in $V_{i+1}^{p-2}$ has degree $0$ or $k-1$ in the layer consisting of the edges of $G[V_{i+1}^{p-2}\cup V_{i+1}^{p-1} \cup V_{i+1}^p]$, while each vertex in $V_{i+1}^p$ has degree $k$ in this layer.
        
       Let $w_1,w_2 \in V_{i+1}^{p}$ be two distinct vertices in the second neighborhood of $u$. Hence, there exist two paths $u \rightarrow v_1 \rightarrow w_1$ and $u \rightarrow v_2 \rightarrow w_2$. We add the edges $uv_1$ and $v_1w_1$ to $F$, and then we complete $F$ in such a way that $v_2w_2$ is the edge incident to $w_2$ that is not in $F$.

        Let $z$ be a vertex in $V_{i+1}^{p-2}$ such that there exists a path $z \rightarrow v_3 \rightarrow w_1$ and $zv_3$ and $v_3w_1$ are in $F$, with $v_3 \neq v_1$. Note that $z$ can be equal to $u$, and $z$ exists since $w_1$ has $k-1 \geq 2$ incident edges in $F$. We then apply the algorithm described in Section \ref{Labellingcon32} to label $F$: we start the labelling of $F$ from $z$, and thus we label the edge $zv_3$ by $M_i = B_i + 2(k-1)|V_{i+1}^p|$, and the edge $v_3w_1$ by $M_i - 1$. We then label the path $u \leftarrow v_1 \leftarrow w_1$, the edge $w_1v_1$ being labelled by $m_i = B_i + 1$, and the edge $uv_1$ by $m_i + 1 = B_i + 2$. We then continue as described in Step 1 of Section \ref{Labellingcon32}, until all the edges of $F$ have been labelled. Recall that each vertex of $V_{i+1}^p$ is of type $k$, meaning each one has $k-1$ incident edges in $F$. Since $k-1$ is even, every time we reach a vertex in $V_{i+1}^p$ during the labelling of $F$, we will be able to extend the current path and immediately label another edge incident to this vertex. This means that, once $F$ has been labelled, all the vertices in $V_{i+1}^p$ will have the same value of $\sigma'$ before starting Step $2$, equal to $(m_i + M_i - 1) \frac{k-1}{2}$.
        
        Again, we switch the pair of labels $(B_i+1,B_i + 2)$ and $(B_i-1,B_i)$. This decreases $\sigma'(w_1)$ by $2$, which does not create any additional issue. This also means that $w_1$ has the smallest value of $\sigma'$ among all the vertices in $V_{i+1}^p$, and hence that its last unlabelled incident edge $w_1v'_1 \not \in F$ will be labelled first during Step $2$, from the way this step works. Therefore, the edges $v'_1w_1$ and $u'_1v'_1$ (where $u'_1$ is the neighbour of $v'_1 \in V_{i+1}^{p-1}$ in $V_{i+1}^{p-2}$) are labelled by $M_i + 1$ and $M_i + 2$.
        
        This switch of labels also decreases $\sigma'(u)$ by $2$. Recall that $w_2v_2 \not \in F$ is the last unlabelled edge incident to $w_2$, and that the labels $M_i + 1$ and $M_i + 2$ have already been used in Step 2, meaning that, in Step 2, the edge $w_2v_2$ will be labelled by some $\beta' \geq M_i+3$, and similarly the edge $uv_2$ will be labelled by some $\beta \geq M_i + 3$. Also recall that, in Section \ref{Labellingcon32}, we used the fact that $\beta \geq M_i + 1$, both when we were in Case $1b$ and $u$ had an odd number of incident edges in $F$, but also for each such label $\beta \in K_2$, in order to obtain the lower bound of $\sigma'(u) \geq (m_i + M_i - 1) \frac{k-1}{2}$ after $k-1$ edges incident to $u$ have been labelled (the $k$th and last labelled edge incident to $u$ being also incident to a predecessor of $u$). Since we decreased $\sigma'(u)$ by $2$, but we have $\beta \geq M_i + 3$ instead of $\beta \geq M_i + 1$, we can still guarantee the same lower bound on $\sigma'(u)$.

        We can then again proceed with the regeneration of the labelling in $C_{i+1}$ without any additional issues.
        
    \end{itemize}

\end{itemize}

We can now repeat this process until there are no more $i$ such that $\sigma(r_i)$ is equal to $\sigma(x)$ for some other vertex $x$ of the graph, meaning the labelling obtained at the end is antimagic.\qed
\end{proof}

\end{document}